\def\pmod #1{\ ({\rm{mod}}\ #1)}
\def\Z{\Bbb Z}
\def\N{\Bbb N}
\def\Q{\Bbb Q}
\def\bg{\bigg}
\def\({\bg(}
\def\){\bg)}
\def\gen{{\rm gen}}
\def\ve{\varepsilon}
\theoremstyle{plain}
\newtheorem{theorem}{Theorem}
\newtheorem{lemma}{Lemma}
\theoremstyle{definition}
\theoremstyle{remark}
\newtheorem{remark}{Remark}
\begin{document}
 \baselineskip=17pt
\hbox{}
\medskip
\title[Almost universal mixed sums of squares and polygonal numbers ]
{Almost universal mixed sums of squares and polygonal numbers}
\date{}
\author[Hai-Liang Wu] {Hai-Liang Wu}

\thanks{2010 {\it Mathematics Subject Classification}.
Primary 11E25; Secondary 11D85, 11E20.
\newline\indent {\it Keywords}. Polygonal numbers, spinor exceptions, ternary quadratic forms.
\newline \indent Supported by the National Natural Science
Foundation of China (Grant No. 11571162).}

\address {(Hai-Liang Wu)  Department of Mathematics, Nanjing
University, Nanjing 210093, People's Republic of China}
\email{\tt whl.math@smail.nju.edu.cn}

\begin{abstract}
For each integer $m\ge3$, let $P_m(x)$ denote the generalized $m$-gonal number $\frac{(m-2)x^2-(m-4)x}{2}$ with $x\in\Z$. Given positive integers $a,b,c,k$ and an odd prime number $p$ with $p\nmid c$, we employ the theory of ternary quadratic forms to determine completely when the mixed sum $ax^2+by^2+cP_{p^k+2}(z)$ represents all but finitely many positive integers.

\end{abstract}

\maketitle

\section{Introduction}
\setcounter{lemma}{0}
\setcounter{theorem}{0}
\setcounter{corollary}{0}
\setcounter{remark}{0}
\setcounter{equation}{0}
\setcounter{conjecture}{0}

For a natural number $m\ge3$, the generalized $m$-gonal number is given by $P_m(x)=\frac{(m-2)x^2-(m-4)x}{2}$
where $x\in\Z$. In 1796 Gauss proved Fermat's assertion that each positive integer can
be expressed as the sum of three triangular numbers (corresponding to m = 3). In 1862 Liouville (cf. Berndt \cite[p.82]{Be}) determined all weighted ternary sums of triangular numbers that represent all natural numbers.
In $2007$, Sun \cite{S07} investigated the mixed sums of squares and triangular numbers. In this direction, Kane and Sun \cite{KS} obtained a classification of almost universal weighted sums of triangular numbers and more generally weighted mixed ternary sums of triangular
and square numbers (a quadratic polynomial is said to be almost universal, if it represents all but finitely many positive integers over $\Z$), this classification was later completed by Chan and Oh \cite{WKCOH}
and Chan and Haensch \cite{WKCANNA}. A. Haensch \cite{ANNA} investigated the almost universal ternary quadratic polynomials with odd prime power conductor.
Recently, Sun \cite{S17} showed that there are totally 12082 possible tuples $(a,b,c,d,e,f)$
with $a\ge c\ge e\ge1$, $b\equiv a\pmod2$ and $|b|\le a$, $d\equiv c\pmod2$ and $|d|\le c$, $f\equiv e\pmod2$
and $|f|\le e$, such that the sum
$$ \frac{x(ax+b)}{2}+\frac{y(cy+d)}{2}+\frac{z(ez+f)}{2}.$$ represents all natural numbers over $\Z$.

Motivated by these works, we shall give a complete characterization of all the triples of
positive integers $(a,b,c)$ for which the ternary sums $ax^2+by^2+cP_{p^k+2}(z)$ are almost universal over $\Z$, where $k>0$ and $p$ is an odd prime not dividing $c$.

Now, we state our main results in this paper. Throughout this paper, without loss of generality, we may assume that $\nu_2(a)\ge\nu_2(b)$. For convenience, the squarefree part of an integer $m$ is denoted by $\mathcal {SF}(m)$ and the odd part of $m$ is denoted by $m'$.

\begin{theorem}\label{Thm3.1}
Let $a,b,c\in\Z^+$ with $gcd(a,b,c)=1$, $\nu_p(a)\equiv\nu_p(b)\pmod 2$ and $\nu_2(a)\ge\nu_2(b)\ge2$.
Suppose that both $(1)$ and $(2)$ in Lemma \ref{Lem2.1} hold. Then $f_{a,b,c,p^k}$ is not almost universal
if and only if all of the following are satisfied:

{\rm (1)} Each prime divisor of $\mathcal{SF}(a'b'c')$ is congruent to $1$ modulo $4$ if
$\nu_2(a)\equiv\nu_2(b)\pmod2$, and is congruent to $1,3$ modulo $8$ if $\nu_2(a)\not\equiv\nu_2(b)\pmod2$.

{\rm (2)} $\nu_p(a)\equiv\nu_p(b)\equiv k\pmod 2$.

{\rm (3)} $a'\equiv b'\pmod 8$, and
$$\begin{cases}p^kb'c'\equiv1,3\pmod8&\mbox{if}\ \nu_2(a)\not\equiv\nu_2(b)\pmod2,\\p^kb'c'\equiv1\pmod4&\mbox{if}\ \nu_2(a)\equiv\nu_2(b)\pmod2.\end{cases}$$

{\rm (4)} $\mathcal{SF}(a'b'c')c^{-1}$ is a quadratic residue modulo $p^k$, and
$ax^2+by^2+cP_{p^k+2}(z)=(\mathcal{SF}(a'b'c')-c(p^k-2)^2)/(8p^k)$ has no integral solutions.
\end{theorem}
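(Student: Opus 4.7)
The strategy is to transform the mixed-sum problem into an inhomogeneous representation problem for a positive definite ternary quadratic form, and then invoke the spinor-genus theory of Duke and Schulze-Pillot, as adapted in the works of Chan--Oh, Chan--Haensch and Haensch cited above. Clearing denominators and completing the square in $z$, a direct computation using $P_{p^k+2}(z)=\tfrac{p^k z^2-(p^k-2)z}{2}$ shows that $n=ax^2+by^2+cP_{p^k+2}(z)$ is solvable in $\Z^3$ if and only if
$$8p^k n + c(p^k-2)^2 \;=\; 8p^k a\,x^2 + 8p^k b\,y^2 + c\,w^2$$
has an integer solution with $w\equiv p^k+2\pmod{2p^k}$. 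Thus $f_{a,b,c,p^k}$ is almost universal iff the ternary form $Q(x,y,w)=8p^k a\,x^2+8p^k b\,y^2+c\,w^2$, restricted to the sublattice coset cut out by the congruence on $w$, represents all but finitely many $N=8p^k n+c(p^k-2)^2$. Hypotheses (1)--(2) of Lemma \ref{Lem2.1} are exactly what is needed to guarantee local representability of every sufficiently large admissible $N$ at each place.

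Once local representability is in hand, the Duke--Schulze-Pillot theorem for positive definite ternary forms reduces the global question to a spinor-genus analysis: any failure of almost universality must come from an infinite family of integers lying in a spinor exceptional squareclass of the coset lattice attached to $Q$. The natural candidate for the exceptional squareclass is $t=\mathcal{SF}(a'b'c')$, and the question becomes whether this squareclass is genuinely spinor exceptional and whether $Q$ actually fails to represent its smallest member. Conditions (1)--(3) of the theorem emerge from the explicit computation of the local spinor norm groups of $Q$: condition (1) controls the contribution of odd primes $q\mid a'b'c'$ (forcing $q\equiv 1\pmod 4$, or $q\equiv 1,3\pmod 8$, according to whether the $2$-adic Jordan splitting of $Q$ is balanced); condition (2) records the parity constraint imposed by the unique Jordan block of scale $p^k$; and condition (3) encodes the delicate $2$-adic spinor norm, whose answer bifurcates according to the parity of $\nu_2(a)-\nu_2(b)$.

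Finally, condition (4) is the effective non-representation test. The quadratic-residue condition on $\mathcal{SF}(a'b'c')c^{-1}$ modulo $p^k$ ensures that the congruence on $w$ is not locally obstructed at $p$, so that $\mathcal{SF}(a'b'c')$ is itself locally admissible; the explicit failure of $f_{a,b,c,p^k}=\mathcal{SF}(a'b'c')$ is then the single non-representation needed to activate the spinor obstruction. A standard scaling argument (multiplying by squares coprime to $2p\cdot a'b'c'$) lifts this single failure to an infinite family of non-represented integers in the exceptional squareclass, establishing the ``not almost universal'' conclusion; conversely, whenever any of (1)--(4) fails, the same spinor-genus framework shows that every sufficiently large integer is represented. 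I expect the main obstacle to be the $2$-adic computation behind condition (3), since the spinor norm at $2$ is highly sensitive to the Jordan decomposition type and must be handled separately for $\nu_2(a)\equiv\nu_2(b)\pmod 2$ and $\nu_2(a)\not\equiv\nu_2(b)\pmod 2$, which is precisely what produces the split cases in conditions (1) and (3).
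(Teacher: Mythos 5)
Your overall strategy is exactly the paper's: reduce to representing $N=8p^kn+c(p^k-2)^2$ by the coset of the lattice $\langle 8p^ka,8p^kb,c\rangle$ (your completed square and the congruence $w\equiv p^k+2\pmod{2p^k}$ are correct), use Lemma \ref{Lem2.1} for genus-level representability, and detect failure of almost universality via a primitive spinor exception $t=\mathcal{SF}(a'b'c')$ using Duke--Schulze-Pillot together with Schulze-Pillot's results on spinor exceptions. However, as written the argument has a genuine gap: every piece of actual content in the theorem --- the specific congruences in (1)--(3) --- is asserted to ``emerge from the explicit computation of the local spinor norm groups'' without that computation being performed, and it is precisely these computations that constitute the proof: the Earnest--Hsia formulas for $\theta(O^+(L_2))$, split according to whether the $2$-adic Jordan orders $r=3+\nu_2(b)$ and $s=3+\nu_2(a)$ are equal or not, and the Earnest--Hsia--Hung criteria comparing $\theta(O^+(L_q))$ and the relative spinor norm groups $\theta^*(L_q,t)$ with $N_q(E)$. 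You also never identify the field $E=\Q(\sqrt{-t\,dL})$; the paper first shows every prime $q\notin\{2,p\}$ is unramified in $E$, forcing $E\in\{\Q(\sqrt{-D}):D=1,2,p,2p\}$, which is what singles out $t=\mathcal{SF}(a'b'c')$ as the only possible exceptional square class in the present case and determines $N_q(E)$, hence the congruence classes modulo $4$ or $8$ in condition (1).

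A second concrete omission: for the spinor obstruction to produce non-represented integers of the form $8p^kn+c(p^k-2)^2$, the exceptional numbers $t\ell^2$ must actually lie in that arithmetic progression. This is where $a'\equiv b'\pmod 8$ (giving $t\equiv c\pmod 8$) and the quadratic-residue condition modulo $p^k$ in (4) really come from --- not from local solvability of the congruence on $w$ at $p$, which is automatic since $P_{p^k+2}$ represents all of $\Z_p$. Relatedly, your ``scaling by squares coprime to $2p\cdot a'b'c'$'' is too coarse: by Schulze-Pillot one must multiply by squares of primes that split (resp.\ are inert) in $E$ according to whether $t$ is missed by the spinor genus or only by $L$ itself, and these primes must in addition be chosen odd and $\equiv\pm1\pmod{p^k}$ so as to stay inside the progression; this is the content of Lemma \ref{Lem2.2} and the discussion following it, and the same dichotomy is needed to make the converse direction (representability of the exceptional class forces almost universality) precise. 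None of this invalidates your plan, but without these steps the proof does not establish the stated conditions.
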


\begin{theorem}\label{Thm3.2}
Let $a,b,c\in\Z^+$ with $gcd(a,b,c)=1$, $\nu_p(a)\equiv\nu_p(b)\pmod 2$ and $\nu_2(a)\ge\nu_2(b)=1$.
Suppose that both $(1)$ and $(2)$ in Lemma \ref{Lem2.1} hold. Then $f_{a,b,c,p^k}$ is not almost universal
if and only if all of the following are satisfied:

{\rm (1)} Each prime divisor of $\mathcal{SF}(a'b'c')$ is congruent to $1$ modulo $4$ if
$\nu_2(a)\equiv\nu_2(b)\pmod2$, and is congruent to $1,3$ modulo $8$ if $\nu_2(a)\not\equiv\nu_2(b)\pmod2$.

{\rm (2)} $\nu_p(a)\equiv\nu_p(b)\equiv k\pmod 2$.

{\rm (3)} $a'\equiv b'\pmod 8$, $\nu_2(a)\equiv\nu_2(b)\pmod 2$ and $p^kb'c'\equiv 1\pmod 4$.

{\rm (4)} $\mathcal{SF}(a'b'c')c^{-1}$ is a quadratic residue modulo $p^k$, and
$ax^2+by^2+cP_{p^k+2}(z)=(\mathcal{SF}(a'b'c')-c(p^k-2)^2)/(8p^k)$ has no integral solutions.
\end{theorem}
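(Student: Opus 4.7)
The plan is to mirror the proof of Theorem~\ref{Thm3.1}, adjusting the $2$-adic analysis to the present case $\nu_2(b)=1$. First, I would rewrite the representation question as a problem about a single ternary quadratic form. Setting $w:=2p^k z-(p^k-2)$ and completing the square gives
\[
8p^k\bigl(ax^2+by^2+cP_{p^k+2}(z)\bigr)+c(p^k-2)^2=8p^k a\,x^2+8p^k b\,y^2+c\,w^2,
\]
so $n$ is represented by $f_{a,b,c,p^k}$ iff $Q(x,y,w):=8p^k a\,x^2+8p^k b\,y^2+c\,w^2$ represents $N:=8p^k n+c(p^k-2)^2$ by an integral vector with $w\equiv p^k+2\pmod{2p^k}$. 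Hence almost universality of $f_{a,b,c,p^k}$ becomes the statement that $Q$ represents every sufficiently large such $N$ with that side congruence.

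Next I would invoke the Duke--Schulze-Pillot theorem together with Kneser's theory of spinor genera: outside a finite set, the integers represented by $Q$ (with the given congruence on $w$) coincide with those in the spinor genus, excluding the primitive spinor exceptions. The assumption that (1) and (2) of Lemma~\ref{Lem2.1} hold supplies the required local representability, so $f_{a,b,c,p^k}$ fails to be almost universal exactly when $Q$ admits infinitely many primitive spinor exceptions inside the prescribed progression.

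These exceptions are then analyzed prime by prime by computing the Jordan splittings of $Q$ over $\Z_q$ and the spinor norm group $\ta(Q;\Z_q)$ via O'Meara's formulae. The analysis at odd primes $q\nmid 2pc$ produces condition~(1); at $q=p$ it produces condition~(2); at $q=2$ it produces condition~(3). The novelty compared with Theorem~\ref{Thm3.1} sits at the prime $2$: when $\nu_2(b)=1$, the $\Z_2$-Jordan splitting of $Q$ contains an odd-rank unimodular block, which enlarges $\ta(Q;\Z_2)$ enough to destroy all spinor exceptions unless $\nu_2(a)\equiv\nu_2(b)\pmod 2$; this is why the ``mixed parity'' subcase in condition~(3) of Theorem~\ref{Thm3.1} is absent here and only the single congruence $p^kb'c'\equiv 1\pmod 4$ survives. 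Condition~(4) then isolates the candidate exception $t=\mathcal{SF}(a'b'c')$: under (1)--(3) its unrepresentability is equivalent to the whole infinite family being unrepresented.

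The main obstacle will be the refined $2$-adic spinor norm computation in this new Jordan configuration. One must split into cases according to the parity of $\nu_2(a)$ and the residue of $a'$ modulo $8$, and track the image in the appropriate idele-class quotient that governs the primitive spinor exceptions; this is what forces condition~(3) into its sharper form, while the odd-prime analysis carries over from Theorem~\ref{Thm3.1} essentially verbatim.
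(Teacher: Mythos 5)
Your outline follows the paper's strategy exactly: pass to the ternary lattice coset, use Duke--Schulze-Pillot and the Schulze-Pillot/Earnest--Hsia--Hung theory of primitive spinor exceptions to reduce almost universality to the absence of spinor exceptions in the progression $\{c(p^k-2)^2+8p^kn\}$, and read off conditions (1), (2), (3) from the local spinor norm groups at the primes dividing $\mathcal{SF}(a'b'c')$, at $p$, and at $2$, with (4) isolating the candidate exception $t=\mathcal{SF}(a'b'c')$. One bookkeeping caveat: the paper first replaces the coset $M+v$ by the honest lattice $L=\langle 8p^ka,8p^kb,c\rangle$ (this is where (1)--(2) of Lemma \ref{Lem2.1} enter), precisely so that the genus/spinor-genus machinery applies to a lattice rather than to representations with your side congruence $w\equiv p^k+2\pmod{2p^k}$; if you keep that congruence you owe a justification that Duke--Schulze-Pillot still applies.

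The genuine gap is in your account of the prime $2$, which is the only place this theorem differs from Theorem \ref{Thm3.1}. Here $L_2\cong\langle c'\rangle\perp\langle 2^4p^kb'\rangle\perp\langle 2^{3+\nu_2(a)}p^ka'\rangle$: every Jordan component has rank one, exactly as when $\nu_2(b)\ge2$, so there is no ``odd-rank unimodular block'' special to $\nu_2(b)=1$, and that is not the mechanism that eliminates the mixed-parity subcase. What actually happens is this. Write $r=4$, $s=3+\nu_2(a)$. If $\nu_2(a)\not\equiv\nu_2(b)\pmod2$ then $s-r$ is odd; for $s-r\in\{1,3\}$ the lattice $L_2$ is of Type E in the sense of \cite{EH78}, so $\theta(O^+(L_2))=\Q_2^{\times}$, while for odd $s-r\ge5$ the binary component $\langle c',2^4p^kb'\rangle$, whose scale gap is exactly $4$, forces the class of $5$ into $\theta(O^+(L_2))$ and the other component forces $2^s\equiv2$, so $\{1,2,5,10\}\Q_2^{\times2}\subseteq\theta(O^+(L_2))$; since in the mixed-parity case $E=\Q(\sqrt{-2})$ and $N_2(E)=\{1,2,3,6\}\Q_2^{\times2}$ does not contain $5$, the required inclusion $\theta(O^+(L_2))\subseteq N_2(E)$ fails. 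It is this scale-$4$ gap --- absent for $\nu_2(b)\ge2$, where the analogous binary contribution is only $\{1,2^{r}p^kb'c'\}\Q_2^{\times2}$ and the mixed-parity alternative of Theorem \ref{Thm3.1}(3) survives --- that leaves only $\nu_2(a)\equiv\nu_2(b)\pmod2$ together with $p^kb'c'\equiv1\pmod4$ in condition (3). Your plan, as stated, would not produce condition (3) without this computation, and the heuristic you offer in its place is incorrect.
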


\begin{remark}
By Theorem \ref{Thm3.2}, it is easy to see that the quadratic polynomials $2x^2+2y^2+P_7(z)$,
$4x^2+2y^2+P_5(z)$ are almost universal. Indeed, via computations, Sun \cite{S17} conjectured that the above
polynomials can represent all natural numbers over $\Z$.
\end{remark}

\begin{theorem}\label{Thm3.3}
Let $a,b,c\in\Z^+$ with $gcd(a,b,c)=1$, $\nu_p(a)\equiv\nu_p(b)\pmod 2$ and $\nu_2(a)\ge\nu_2(b)=0$.
Suppose that both $(1)$ and $(2)$ in Lemma \ref{Lem2.1} hold. Then $f_{a,b,c,p^k}$ is not almost universal
if and only if all of the following are satisfied:

{\rm (1)} Each prime divisor of $\mathcal{SF}(a'b'c')$ is congruent to $1$ modulo $4$ if
$\nu_2(a)\equiv\nu_2(b)\pmod2$, and is congruent to $1,3$ modulo $8$ if $\nu_2(a)\not\equiv\nu_2(b)\pmod2$.

{\rm (2)} $\nu_p(a)\equiv\nu_p(b)\equiv k\pmod 2$.

{\rm (3)} $4\nmid c$, $a'\equiv b'\pmod{2^{3-\nu_2(c)}}$, and
$$\begin{cases}p^kb'c'\equiv1\pmod4, \nu_2(a)\ge 2\ and\ \nu_2(a)\equiv 0\pmod 2&\mbox{if}\ 2\mid\mid c,\\p^kb'c'\equiv1\pmod8, \nu_2(a)\ge 3\ and\ \nu_2(a)\not\equiv0\pmod2&\mbox{if}\ 2\nmid c.\end{cases}$$

{\rm (4)} $2^{\nu_2(c)}\mathcal{SF}(a'b'c')c^{-1}$ is a quadratic residue modulo $p^k$, and
$ax^2+by^2+cP_{p^k+2}(z)=(2^{\nu_2(c)}\mathcal{SF}(a'b'c')-c(p^k-2)^2)/(8p^k)$ has no integral solutions.
\end{theorem}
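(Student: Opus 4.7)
The plan is to follow the same road map as in the proofs of Theorems \ref{Thm3.1} and \ref{Thm3.2}, modified to accommodate the sub-case $\nu_2(b)=0$ in which $b$ is odd. First I would linearize the polygonal term by completing the square: setting $t:=2p^kz-(p^k-2)$ transforms $8p^k\cdot cP_{p^k+2}(z)$ into $ct^2-c(p^k-2)^2$, so $n$ is represented by $f_{a,b,c,p^k}$ if and only if
\[
N(n):=8p^kn+c(p^k-2)^2
\]
is represented by the ternary quadratic form $Q(X,Y,T)=8p^kaX^2+8p^kbY^2+cT^2$ subject to the congruence $T\equiv 2-p^k\pmod{2p^k}$. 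Thus the task becomes the study of representations of the arithmetic progression $\{N(n)\colon n\in\Z^+\}$ by $Q$ with this side condition.

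Next I would invoke conditions (1) and (2) of Lemma \ref{Lem2.1}, which as in the proofs of the preceding two theorems guarantee that for all but finitely many $n$ the value $N(n)$ is locally represented by $Q$ at every place, with the required congruence. Once local representability is secured, the only remaining obstruction to almost universality is a spinor-genus obstruction at $p$. I would then compute the 2-adic and $p$-adic Jordan splittings of $Q$ in the regime $\nu_2(b)=0$. Since $b$ is a 2-adic unit, the 2-adic Jordan decomposition differs genuinely from that in Theorems \ref{Thm3.1} and \ref{Thm3.2}, and the sub-cases $2\mid\mid c$ and $2\nmid c$ must be treated separately because $\nu_2(c)$ decides whether the $T^2$-block or the $bY^2$-block forms the unimodular Jordan component. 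Matching the squareclass data of $N(n)$ at each place against the local representation conditions of $Q$ produces exactly conditions (1), (2), (3) of the theorem: (1) captures the requirement at odd primes dividing $\mathcal{SF}(a'b'c')$, (2) synchronizes the $p$-adic valuations, and (3) records the 2-adic squareclass constraint split according to whether $c$ is odd or $2\mid\mid c$.

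The final condition (4) is the spinor-exception analysis, which is where the main work lies. Under (1)--(3), for $n$ sufficiently large $N(n)$ lies in the spinor genus of $Q$; the question is whether $Q$ itself, as opposed to another form in its spinor genus, represents $N(n)$ together with the congruence on $T$. By the standard theory of spinor exceptions, the exceptional set at $p$ forms a single squareclass, and I would identify it as the class of $2^{\nu_2(c)}\mathcal{SF}(a'b'c')c^{-1}$ modulo $(\Q_p^\times)^2$. Translating this squareclass through the change of variables yields the quadratic residue requirement modulo $p^k$, together with the explicit non-representability of the seed value $n=2^{\nu_2(c)}\mathcal{SF}(a'b'c')$; the latter is essential, since a single represented element in the exceptional squareclass would force representation of an infinite sub-progression of $n$'s, thereby restoring almost universality. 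For the converse, if any of (1)--(4) fails, then either local representability fails only finitely often, or infinitely many $n$ avoid the exceptional squareclass, so $f_{a,b,c,p^k}$ represents all but finitely many positive integers. The main obstacle I anticipate is the 2-adic bookkeeping in the two sub-cases of (3): with $b$ a 2-adic unit, the unimodular Jordan component of $Q$ shifts with $\nu_2(c)$, and tracing the associated 2-adic spinor norm group and its interaction with the units $b'c'p^k$ modulo $8$ is the delicate point that distinguishes Theorem \ref{Thm3.3} from its predecessors.
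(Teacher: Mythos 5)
Your overall framework (passing to the lattice coset, invoking Lemma \ref{Lem2.1} for local representability, and using the Duke--Schulze-Pillot theorem so that only spinor-genus behaviour can obstruct almost universality) matches the paper's. But there is a genuine gap in where you locate conditions (1)--(3). You claim they arise from ``matching the squareclass data of $N(n)$ at each place against the local representation conditions of $Q$,'' with only condition (4) belonging to the spinor-exception analysis. This is not tenable: local representability of every $8p^kn+c(p^k-2)^2$ is already equivalent to the two conditions of Lemma \ref{Lem2.1}, which are assumed in the hypotheses, so there are no further local representation conditions left to extract. In the paper, conditions (1)--(3) are precisely the criterion for the specific integer $t=2^{\nu_2(c)}\mathcal{SF}(a'b'c')$ to be a primitive spinor exception of $\gen(L)$, obtained by computing the spinor norm groups $\theta(O^+(L_q))$ and the relative groups $\theta^*(L_q,t)$ and comparing them with $N_q(E)$ for $E=\Q(\sqrt{-t\,dL})\in\{\Q(\sqrt{-1}),\Q(\sqrt{-2})\}$: condition (1) says the primes dividing $\mathcal{SF}(a'b'c')$ split in $E$, condition (2) is the computation at $p$, and condition (3) is the computation at $2$ via the Earnest--Hsia formulas (this is where the exclusion of $4\mid c$ by a Type E argument, the case split $2\nmid c$ versus $2\mid\mid c$, and the parity constraints on $\nu_2(a)$ all come from). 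Your proposal contains no mechanism for producing these spinor-norm computations, and your converse becomes incoherent under your reading: if (1)--(3) were local representability conditions, their failure would force $f_{a,b,c,p^k}$ to miss infinitely many integers, whereas the theorem (and your own final paragraph) need their failure to imply almost universality.

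A secondary inaccuracy: the spinor obstruction is not ``at $p$''; it is a global condition assembled from all completions, and the quadratic-residue requirement in (4) is not the identification of the exceptional squareclass at $p$ but the condition that the candidate exception $t$ actually lands in the progression $\{c(p^k-2)^2+8p^kn\colon n\in\N\}$ after scaling by a suitable square, which is what makes it relevant to the values of $f_{a,b,c,p^k}$. You do correctly anticipate that the $2$-adic bookkeeping with $b$ a unit is the delicate new feature of this theorem, but as written the proposal cannot produce the stated conditions.
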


\begin{remark}
According to Theorem \ref{Thm3.3}, one may easily verify that $4x^2+y^2+P_5(z)$ and $8x^2+y^2+P_5(z)$ are
almost universal.
\end{remark}

Now we turn to the cases when $\nu_p(a)\not\equiv\nu_p(b)\pmod2$. Let $a=p^{\nu_p(a)}a_0$ and $b=p^{\nu_p(b)}b_0$ with $p\nmid a_0b_0$. For each prime $q$, let $N_q(E)$ denote the group of local norm from $E_{\mathfrak{B}}$ to $\Q_q$, where $\mathfrak{B}$ is an extension of $q$ to $E$.

\begin{theorem}\label{Thm3.4}
Let $a,b,c\in\Z^+$ with $gcd(a,b,c)=1$, $\nu_p(a)\not\equiv\nu_p(b)\pmod 2$,
$\nu_2(a)\equiv\nu_2(b)\pmod2$ and $p\equiv3\pmod4$.
Suppose that both $(1)$ and $(2)$ in Lemma \ref{Lem2.1} hold. Then $f_{a,b,c,p^k}$ is not almost universal
if and only if all of the following are satisfied:

{\rm (1)}  For each prime divisor $q$ of $\mathcal{SF}(pa'b'c')$, we have $\left(\frac{-p}{q}\right)=1$.

{\rm (2)} $\left(\frac{2b_0c}{p}\right)=\left(\frac{2a_0c}{p}\right)=\left(\frac{a_0b_0}{p}\right)=1$.
\medskip

{\rm (3)} $pa'b'\equiv1\pmod{2^{3-\nu_2(c)}}$ and one of the following holds
\begin{align*}
{\rm (i)}\ &p\equiv7\pmod8,\\{\rm (ii)}\ &\nu_2(b)\not\equiv\nu_2(c)\pmod2\ and\ \nu_2(a)>\nu_2(b), \\{\rm (iii)}\ &\nu_2(b)\not\equiv\nu_2(c)\pmod2, \nu_2(a)=\nu_2(b)\ and\ a'b'\equiv3\pmod4.
\end{align*}

{\rm (4)} $2^{\nu_2(c)}\mathcal{SF}(pa'b'c')c^{-1}$ is a quadratic residue modulo $p^k$, and
$ax^2+by^2+cP_{p^k+2}(z)=(2^{\nu_2(c)}\mathcal{SF}(pa'b'c')-c(p^k-2)^2)/(8p^k)$ has no integral solutions.
\end{theorem}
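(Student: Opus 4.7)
The plan is to recast the almost-universal question for $f_{a,b,c,p^k}$ as a representation problem for a ternary quadratic lattice, and then apply the Duke--Schulze-Pillot theorem together with a careful spinor-genus analysis. Setting $w = 2p^k z - (p^k-2)$ one has the identity
\[ 8p^k \, f_{a,b,c,p^k}(x,y,z) + c(p^k-2)^2 = 8p^k a x^2 + 8p^k b y^2 + c w^2, \]
so an integer $n$ is represented by $f_{a,b,c,p^k}$ iff $8p^k n + c(p^k-2)^2$ is represented by the ternary form $Q(x,y,w) = 8p^k a x^2 + 8p^k b y^2 + c w^2$ with the constraint $w \equiv -(p^k-2) \pmod{2p^k}$. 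Hypotheses $(1)$--$(2)$ of Lemma \ref{Lem2.1} ensure that this side condition can be absorbed into local representability, so almost-universality of $f_{a,b,c,p^k}$ is equivalent to representability of all sufficiently large integers in a fixed residue class by the associated ternary $\Z$-lattice $L$.

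By the Duke--Schulze-Pillot theorem, a sufficiently large integer is represented by $L$ iff it is represented by the spinor genus $\spn(L)$, except for the finitely many primitive spinor exceptions. Hence $f_{a,b,c,p^k}$ fails to be almost universal exactly when there exists an eligible squarefree class $t$ such that (a) $t$ is locally represented by $L_q$ for every prime $q$, (b) $t$ is a primitive spinor exception of $L$, and (c) $t$ is not globally represented. Conditions $(1)$--$(3)$ describe precisely when such a $t$ exists. Condition $(1)$ handles odd primes $q \neq p$: the constraint $\bigl(\tfrac{-p}{q}\bigr) = 1$ arises from a Hilbert-symbol computation using $p \equiv 3 \pmod 4$. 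Condition $(2)$ handles the prime $p$: because $\nu_p(a) \not\equiv \nu_p(b) \pmod 2$, the associated quadratic extension $E$ is ramified at $p$, and the requirement that $t$ be a local norm from $E_\mathfrak{B}$ at each relevant place imposes the three simultaneous Legendre symbol conditions $\bigl(\tfrac{2b_0 c}{p}\bigr) = \bigl(\tfrac{2a_0 c}{p}\bigr) = \bigl(\tfrac{a_0 b_0}{p}\bigr) = 1$. Condition $(3)$ handles the dyadic prime, where the three subcases (i)--(iii) reflect the possible Jordan splittings of $L_2$ given the joint parities of $\nu_2(a), \nu_2(b), \nu_2(c)$. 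Condition $(4)$ then pins down the unique candidate $t = 2^{\nu_2(c)}\mathcal{SF}(pa'b'c')$ and demands both that it be a spinor exception (the quadratic-residue condition modulo $p^k$) and that it not be represented by $f_{a,b,c,p^k}$ globally.

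The necessity direction ("$\Rightarrow$") I argue by contrapositive: if any of $(1)$--$(4)$ fails, I produce an arithmetic progression of large targets, every member of which is both locally represented and not a spinor exception, hence represented by $L$, and I check that infinitely many members of this progression correspond via $w \equiv -(p^k-2) \pmod{2p^k}$ to representations of large integers by $f_{a,b,c,p^k}$. For sufficiency, assuming $(1)$--$(4)$, I take the class $t = 2^{\nu_2(c)}\mathcal{SF}(pa'b'c')$ and use the local data of $(1)$--$(3)$, together with the standard theory of primitive spinor exceptions, to exhibit an infinite family $\{t m^2\}$ of integers which are locally represented by $L$ but are unrepresented spinor exceptions lying in the admissible residue class.

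The principal obstacle will be the $p$-adic spinor-norm computation in the ramified regime. Since $\nu_p(a) \not\equiv \nu_p(b) \pmod 2$, the Jordan decomposition of $L_p$ is the orthogonal sum of a unimodular and a $p$-modular piece, and the attached extension $E/\Q_p$ is ramified; computing the spinor norm group $\theta(O^+(L_p))$, intersecting it with $N_p(E)$, and then tracing this through the adelic spinor genus exact sequence is exactly what forces the three simultaneous Legendre symbol conditions in $(2)$, and one must verify their mutual independence given $(1)$ and $\gcd(a,b,c)=1$. The dyadic analysis yielding $(3)$ is technically involved but parallels the corresponding step in Theorems \ref{Thm3.1}--\ref{Thm3.3} and proceeds by the same Jordan-decomposition calculus applied subcase by subcase.
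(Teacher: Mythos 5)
Your proposal follows essentially the same route as the paper: the same lattice--coset reformulation, the same appeal to Duke--Schulze-Pillot and the Schulze-Pillot/Earnest--Hsia--Hung theory of primitive spinor exceptions, and the same identification of conditions (1)--(4) with the local containments $\theta(O^+(L_q))\subseteq N_q(E)$ at primes $q\mid \mathcal{SF}(pa'b'c')$, at $q=p$ (where the paper uses Kneser's formula for $\theta(O^+(L_p))$), and at $q=2$ (split according to $p\bmod 8$ and the Jordan splitting), together with the global (non)representation of the single candidate $t=2^{\nu_2(c)}\mathcal{SF}(pa'b'c')$. The one phrasing worth correcting is that condition (2) comes from $\theta(O^+(L_p))\subseteq N_p(E)$ rather than from $t$ itself being a local norm, but your closing paragraph makes clear you intend exactly that computation.
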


\begin{remark}
By Theorem \ref{Thm3.4}, one may readily check that $x^2+15y^2+P_5(z)$, $x^2+11y^2+P_5(z)$, $x^2+7y^2+2P_5(z)$ and $4x^2+3y^2+P_5(z)$ are almost universal.
In fact, Sun \cite{S17} conjectured that the above polynomials can represent all natural numbers over $\Z$.
\end{remark}

\begin{theorem}\label{Thm3.5}
Let $a,b,c\in\Z^+$ with $gcd(a,b,c)=1$, $\nu_p(a)\not\equiv\nu_p(b)\pmod 2$,
$\nu_2(a)\equiv\nu_2(b)\pmod2$, $\nu_2(a)\ge\nu_2(b)$ and $p\equiv1\pmod4$.
Suppose that both $(1)$ and $(2)$ in Lemma \ref{Lem2.1} hold. Then $f_{a,b,c,p^k}$ is not almost universal
if and only if all of the following are satisfied:

{\rm (1)}  For each prime divisor $q$ of $\mathcal{SF}(pa'b'c')$, we have $\left(\frac{-p}{q}\right)=1$.

{\rm (2)} $\left(\frac{2b_0c}{p}\right)=\left(\frac{2a_0c}{p}\right)=\left(\frac{a_0b_0}{p}\right)=1$.
\medskip

{\rm (3)} $4\nmid c$, $pa'b'\equiv1\pmod{2^{3-\nu_2(c)}}$, and one of the following holds:
\begin{align*}
{\rm (i)}\ &2^{1+\nu_2(b)}b'c'\in N_2(\Q(\sqrt{-p}))\ and\ \nu_2(a)>\nu_2(b)\ge2,
\\{\rm (ii)}\ &b'c'\equiv1\pmod4,\ \nu_2(b)\in\{0,1\},\ \nu_2(c)\not\equiv\nu_2(b)\pmod2\ and\
\nu_2(a)>\nu_2(b),
\\{\rm (iii)}\ &\nu_2(a)=\nu_2(b)\ge1,\ and\ \begin{cases}b'c'\equiv1\pmod4\ &\mbox{if}\ p\equiv1\pmod8,\\b'c'\equiv 2+(-1)^{\nu_2(b)}\pmod4\ &\mbox{if}\ p\equiv5\pmod8.\end{cases}
\end{align*}

{\rm (4)} $2^{\nu_2(c)}\mathcal{SF}(pa'b'c')c^{-1}$ is a quadratic residue modulo $p^k$, and
$ax^2+by^2+cP_{p^k+2}(z)=(2^{\nu_2(c)}\mathcal{SF}(pa'b'c')-c(p^k-2)^2)/(8p^k)$ has no integral solutions.
\end{theorem}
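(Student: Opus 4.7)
The plan is to reduce the polynomial representation problem to a ternary quadratic form representation problem and then invoke the theory of spinor exceptions, following the same strategy that must underlie Theorems \ref{Thm3.1}--\ref{Thm3.4}. The novelty of Theorem \ref{Thm3.5} lies entirely in the $2$-adic analysis, which becomes noticeably more delicate when $p \equiv 1 \pmod 4$.

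First I would complete the square in $z$. Since $P_{p^k+2}(z) = (p^k z^2 - (p^k-2)z)/2$, the substitution $W = 2 p^k z - (p^k-2)$ gives the equivalence: $n$ is represented by $f_{a,b,c,p^k}$ over $\Z$ if and only if
\[
N_n := 8 p^k n + c(p^k-2)^2 = 8 p^k a \cdot x^2 + 8 p^k b \cdot y^2 + c \cdot W^2
\]
with $W \equiv 2-p^k \pmod{2p^k}$. Hence almost universality of $f_{a,b,c,p^k}$ is equivalent to the ternary form $Q(X,Y,W) := 8 p^k a X^2 + 8 p^k b Y^2 + c W^2$ representing almost every integer in a fixed union of arithmetic progressions modulo $2 p^k$. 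Parts (1) and (2) of Lemma \ref{Lem2.1}, which we assume, secure local solvability at every prime for all sufficiently large such $N_n$.

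By the Duke--Schulze-Pillot theorem, a positive definite ternary form represents every sufficiently large locally represented integer outside finitely many spinor exceptional square classes. Thus $f_{a,b,c,p^k}$ fails to be almost universal precisely when there exist infinitely many admissible $n$ with $N_n$ lying in such an exceptional class. Following the framework of Haensch \cite{ANNA} and Chan--Oh \cite{WKCOH}, I would determine the spinor genus of $Q$. Under the hypotheses $\nu_p(a) \not\equiv \nu_p(b) \pmod 2$ and $p \equiv 1 \pmod 4$, the field governing the spinor norms is $E = \Q(\sqrt{-p})$. Condition (1) is then the classical criterion that every prime dividing the minimal candidate square class $2^{\nu_2(c)} \mathcal{SF}(pa'b'c')$ splits in $E$; condition (2) is the $p$-adic spinor norm comparison, where $p \equiv 1 \pmod 4$ reduces the relevant Hilbert symbols to Legendre symbols; and condition (3) is the $2$-adic spinor norm comparison against the local norm group $N_2(E)$.

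The main technical obstacle is the $2$-adic calculation giving condition (3). Since $p \equiv 1 \pmod 4$, the subgroup $N_2(\Q(\sqrt{-p}))$ of $\Q_2^\times$ has index $2$, and its explicit description depends on whether $p \equiv 1$ or $p \equiv 5 \pmod 8$. One must therefore run through the possible parities of $\nu_2(a), \nu_2(b), \nu_2(c)$ (subject to $\nu_2(a) \ge \nu_2(b)$ and $\nu_2(a) \equiv \nu_2(b) \pmod 2$) and compute the $2$-adic spinor norm of $Q$ in each case. Case (i) covers $\nu_2(a) > \nu_2(b) \ge 2$, where the condition becomes the honest norm membership $2^{1+\nu_2(b)} b'c' \in N_2(\Q(\sqrt{-p}))$; case (ii) handles the boundary $\nu_2(b) \in \{0,1\}$ with $\nu_2(c) \not\equiv \nu_2(b) \pmod 2$, where $N_2(\Q(\sqrt{-p}))$ intersected with the relevant square class forces $b'c' \equiv 1 \pmod 4$; and case (iii) handles the diagonal $\nu_2(a) = \nu_2(b) \ge 1$, where the condition bifurcates according to $p \pmod 8$ for exactly the reason just indicated. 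Finally, condition (4) eliminates the possibility that the minimal candidate $n_0 = 2^{\nu_2(c)} \mathcal{SF}(pa'b'c')$ is itself represented by $f$; combined with the quadratic-residue requirement modulo $p^k$ (which guarantees that $N_n$ lies in the exceptional square class for infinitely many $n$ in the allowed progression), this completes the equivalence.
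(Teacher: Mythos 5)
Your overall strategy is the same as the paper's: complete the square to pass to representation of $8p^kn+c(p^k-2)^2$ by a lattice (coset), invoke the Duke--Schulze-Pillot theorem so that failure of almost universality is equivalent to the existence of a primitive spinor exception of $\gen(L)$ in the progression $\{c(p^k-2)^2+8p^kn\}$, identify the governing field as $E=\Q(\sqrt{-p})$, and match conditions (1), (2), (3) to the spinor norm comparisons at the primes $q\nmid 2p$, at $p$, and at $2$ respectively. So there is no divergence of method.

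The genuine gap is that the entire $2$-adic computation --- which is the actual content of this theorem and occupies almost all of the paper's proof --- is asserted rather than performed. You never compute $\theta(O^+(L_2))$ in any case, so you cannot verify that the inclusions $\theta(O^+(L_2))\subseteq N_2(E)$ and the equality $\theta^*(L_2,w)=N_2(E)$ are equivalent to exactly the conditions (i)--(iii) of (3). Concretely, three things are missing. First, the case analysis must be run on the splitting $L_2\cong 2^{\nu_2(c)}\langle c',2^rp^kb',2^sp^ka'\rangle$ using the Earnest--Hsia formulae (\cite[Theorem 2.7, 1.9]{EH75} for $r<s$ and \cite[1.2]{EH78} for $r=s$), with $2$ now ramified in $E$ and $N_2(E)=\{1,5,1+p,5(1+p)\}\Q_2^{\times2}$; the bifurcation on $p\bmod 8$ in (iii) and the exact norm condition in (i) only emerge from these explicit group computations. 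Second, several of the structural constraints in (3) do not come from ``intersecting with the relevant square class'' as you suggest, but from excluding lattices of Type E (which force $\theta(O^+(L_2))=\Q_2^{\times}$): this is where $4\nmid c$, the exclusion of $\nu_2(a)=\nu_2(b)=0$ in (iii), and part of the parity condition $\nu_2(c)\not\equiv\nu_2(b)\pmod 2$ in (ii) actually originate. Third, you ignore the relative (primitive) spinor norm $\theta^*(L_2,w)$ entirely; it is needed because in the subcase $\nu_2(b)=0$, $2\nmid c$ the paper kills the candidate via $\theta^*(L_2,w)\not\subseteq N_2(E)$ even though this is not visible from $\theta(O^+(L_2))$ alone. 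A related smaller omission: in the converse direction the congruence $pa'b'\equiv1\pmod{2^{3-\nu_2(c)}}$ must be justified (the paper derives it from $w\,l^2\equiv c(p^k-2)^2\pmod{8p^k}$ together with $\gcd(c',pa'b')=1$), and the passage from ``$w$ is a spinor exception not represented by $L$'' to ``infinitely many targets are missed'' requires the split/inert prime construction of Lemma \ref{Lem2.2}, which you compress into a single sentence. Without the computations, the statement of (3) is being read off rather than proved.
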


\begin{remark}
In view of the above theorem, we can easily verify that $x^2+5y^2+P_5(z)$ is almost universal.
\end{remark}

The following theorem will cover all the remaining cases, for convenience, we set
$$\ve=\begin{cases}1&\mbox{if}\ \nu_p(b)\not\equiv k\pmod2,\\2&\mbox{otherwise}.\end{cases}$$

\begin{theorem}\label{Thm3.6}
Let $a,b,c\in\Z^+$ with $gcd(a,b,c)=1$, $\nu_p(a)\not\equiv\nu_p(b)\pmod 2$,
$\nu_2(a)\not\equiv\nu_2(b)\pmod2$, and $\nu_2(a)\ge\nu_2(b)$.
Suppose that both $(1)$ and $(2)$ in Lemma \ref{Lem2.1} hold. Then $f_{a,b,c,p^k}$ is not almost universal
if and only if all of the following are satisfied:

{\rm (1)}  For each prime divisor $q$ of $\mathcal{SF}(pa'b'c')$, we have $\left(\frac{-2p}{q}\right)=1$.

{\rm (2)} $\left(\frac{2a_0b_0}{p}\right)=\left(\frac{\ve b_0c}{p}\right)=1.$
\medskip

{\rm (3)} $2\nmid c$, $pa'b'\equiv1\pmod8$, $\nu_2(b)\ne1$ and one of the following holds:
\begin{align*}
{\rm (i)}\ &2^{1+\nu_2(b)}p^kb'c'\in N_2(\Q(\sqrt{-2p}))\ and\ \nu_2(a)>\nu_2(b)\ge2,
\\{\rm (ii)}\ &p^kb'c'\equiv p\pmod 8,\nu_2(b)=0\ and\ \nu_2(a)\ge3.
\end{align*}

{\rm (4)} $\mathcal{SF}(pa'b'c')c^{-1}$ is a quadratic residue modulo $p^k$, and
$ax^2+by^2+cP_{p^k+2}(z)=(\mathcal{SF}(pa'b'c')-c(p^k-2)^2)/(8p^k)$ has no integral solutions.
\end{theorem}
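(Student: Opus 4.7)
The plan is to follow the spinor-genus framework used in Theorems \ref{Thm3.1}--\ref{Thm3.5}, specialized to the twin asymmetry $\nu_p(a)\not\equiv\nu_p(b)\pmod 2$ and $\nu_2(a)\not\equiv\nu_2(b)\pmod 2$. First I would apply the standard substitution $w=2p^k z-(p^k-2)$, which converts the equation $n=ax^2+by^2+cP_{p^k+2}(z)$ into
\begin{equation*}
Q(x,y,w):=8p^k a\,x^2+8p^k b\,y^2+c\,w^2=8p^k n+c(p^k-2)^2,
\end{equation*}
subject to the congruence $w\equiv 2-p^k\pmod{2p^k}$. Since conditions (1) and (2) of Lemma \ref{Lem2.1} guarantee local representability at every prime, any obstruction to almost universality of $f_{a,b,c,p^k}$ must come from the global-from-local failure encoded in the spinor exceptions of $Q$.

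Next I would invoke the standard fact that a positive definite ternary quadratic form represents every sufficiently large integer that is represented by its spinor genus, so the only integers missed by $Q$ in infinitely many progressions are spinor exceptions, which lie in a finite union of square classes $tN^2$ and are controlled by an imaginary quadratic splitting field $E$. A direct computation of the Clifford invariants of $Q$ at $2$ and $p$, exploiting the twin asymmetry, should identify $E=\Q(\sqrt{-2p})$ and pin the relevant square class to that of $\mathcal{SF}(pa'b'c')$. The rest of the argument is bookkeeping: translate ``$\mathcal{SF}(pa'b'c')\cdot p^{2j}$ lies in a spinor-exceptional, non-represented class for every $j\ge 0$'' into arithmetic conditions on $a,b,c,p,k$.

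Each item in the theorem should then unpack as follows. Condition (1) is precisely the splitting condition $\left(\frac{-2p}{q}\right)=1$ for every prime $q\mid\mathcal{SF}(pa'b'c')$, which is what makes the chosen square class contain an infinite progression that is locally represented everywhere. Condition (2) packages the $p$-adic local representability of the shifted integer through Hilbert-symbol identities at $p$. Condition (3) is the delicate 2-adic version: the shifted integer $8p^k n+c(p^k-2)^2$ must lie in the local norm group $N_2(\Q(\sqrt{-2p}))$ for the spinor-exceptional mechanism to activate, and unfolding this yields the congruences $pa'b'\equiv 1\pmod 8$, the parity restrictions on $\nu_2(a),\nu_2(b),\nu_2(c)$, and the exclusion of the slot $\nu_2(b)=1$. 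Condition (4) supplies the smallest witness: if $\mathcal{SF}(pa'b'c')$ is genuinely not represented, then pulling back along the progression generated by multiplication by $p^2$ produces infinitely many unrepresented integers; conversely, if it \emph{is} represented, a descent on the same progression shows the representation propagates to almost all members, so $f_{a,b,c,p^k}$ is almost universal after all.

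The main obstacle I anticipate is the 2-adic analysis underlying condition (3). The local norm group $N_2(\Q(\sqrt{-2p}))$ is an index-$2$ subgroup of $\Q_2^*$ whose explicit description depends sensitively on $p\bmod 8$, so the membership condition $2^{1+\nu_2(b)}p^k b'c'\in N_2(\Q(\sqrt{-2p}))$ appearing in case (i) has to be expanded into each residue class of $p\pmod 8$ and cross-referenced with the 2-adic Hasse invariant of $Q$. Moreover, the degenerate case $\nu_2(b)=1$ must be treated separately and shown to lie outside the spinor-exceptional regime, which is exactly why it is excluded in (3). Once these 2-adic computations are carried out carefully, the remainder of the argument is a routine verification patterned on Theorems \ref{Thm3.1}--\ref{Thm3.5}.
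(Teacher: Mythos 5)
Your overall framework---reduce to representation of $8p^kn+c(p^k-2)^2$ by the coset $M+v$ (equivalently the lattice $L=\langle 8p^ka,8p^kb,c\rangle$), invoke Duke--Schulze-Pillot so that only spinor exceptions can obstruct almost universality, and identify the splitting field as $E=\Q(\sqrt{-2p})$---is the same as the paper's. But the proposal stops exactly where the actual proof begins. The entire content of Theorem \ref{Thm3.6} is the explicit computation, via the Earnest--Hsia formulas, of the local spinor norm groups $\theta(O^+(L_q))$ and the relative groups $\theta^*(L_q,w)$ for $w=\mathcal{SF}(pa'b'c')$, and the verification that the containments $\theta(O^+(L_q))\subseteq N_q(E)$ and $\theta^*(L_q,w)=N_q(E)$ hold for all $q$ precisely when conditions (1)--(3) do. You defer all of this (``once these 2-adic computations are carried out\dots''), yet it is exactly here that the shape of condition (3) is produced: the paper shows, case by case, that when $\nu_2(b)=1$ one always gets $5\in\theta(O^+(L_2))$ while $5\notin N_2(\Q(\sqrt{-2p}))=\{1,\,2p,\,1+2p,\,4+2p\}\Q_2^{\times2}$ (because $1+2p\not\equiv1\pmod 4$), which is why $\nu_2(b)=1$ is excluded; and that $2\mid c$ forces either a Type E lattice (spinor norm all of $\Q_2^{\times}$) or again $5\in\theta(O^+(L_2))$, which is why $2\nmid c$ appears. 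None of this can be anticipated without doing the computation, so as written the proposal is a plan rather than a proof.

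Beyond incompleteness, you mischaracterize what conditions (1)--(3) encode. They are not local representability conditions on the shifted integers $8p^kn+c(p^k-2)^2$---that is already guaranteed at the genus level by Lemma \ref{Lem2.1} and the hypotheses---but rather the lattice-theoretic conditions from \cite{EHH} making $w$ a primitive spinor exception of $\gen(L)$. In particular, (2) comes from Kneser's formula for $\theta(O^+(L_p))$ together with the requirement that it lie in $N_p(E)=\{1,p\}\Q_p^{\times2}$, and (3)(i) is a statement about the spinor norms of the binary Jordan constituents $\langle c',2^rp^kb'\rangle$ and $\langle 2^rp^kb',2^sp^ka'\rangle$ of $L_2$, not about the target integer lying in $N_2(E)$. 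Finally, your propagation mechanism in (4) (``the progression generated by multiplication by $p^2$'') would fail: $p$ need not be inert in $E$, and one must stay in the residue class $c(p^k-2)^2\pmod{8p^k}$. The paper instead considers $wq_0^2q^2$, where $q_0$ adjusts the congruence class and $q$ ranges over the infinitely many primes inert in $E$ with $q^2\equiv1\pmod{8p^k}$ supplied by Lemma \ref{Lem2.2}(ii); only with the correct splitting behaviour of the multiplier does Schulze-Pillot's theorem yield that $wq_0^2q^2$ is not represented by $L$.
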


\begin{remark}
By Theorem \ref{Thm3.6}, one may easily check that $10x^2+y^2+P_5(z)$ is almost universal. Indeed, via Sun's
computation, it seems that this polynomial can represent each natural number over $\Z$.
\end{remark}

 Finally, we give an outline of this paper. In Section 2, we will give a brief overview of the theory of ternary quadratic forms which we need in our proofs, and we will prove the main results in Section 3.
\maketitle
\section{Some preparations}
\setcounter{lemma}{0}
\setcounter{theorem}{0}
\setcounter{corollary}{0}
\setcounter{remark}{0}
\setcounter{equation}{0}
\setcounter{conjecture}{0}

Let $L$ be a $\Z$-lattice on a positive definite ternary quadratic space $(V,B,Q)$ over $\Q$.
The discriminant of $L$ is denoted by $dL$.
Set $A$ be a
symmetric matrix, we write $L\cong A$ if $A$ is the gram matrix for $L$ with respect to some basis of $V$.
An $n\times n$ diagonal matrix with $a_1,...,a_n$ as the diagonal entries is written as $\langle a_1,...,a_n\rangle$ (any unexplained notation can be found in \cite{C,Ki,Oto}).

Given relatively prime positive integers $a,b,c$ and an odd prime $p$ not dividing $c$, let $f_{a,b,c,p^k}(x,y,z)=ax^2+by^2+cP_{p^k+2}(z)$. One may easily verify that an integer $n$ can
be represented by $f_{a,b,c,p^k}$ if and only if $8p^kn+c(p^k-2)^2$ can be represented by the coset
$M+v$, where $M$ is the $\Z$-lattice $\langle 8p^ka,8p^kb,4p^{2k}c\rangle$ in the orthogonal basis
$\{e_1,e_2,e_3\}$ and $v=-\frac{p^k-2}{2p^k}e_3$.

In order for $f_{a,b,c,p^k}$ to be almost universal, a necessary condition is that every integer of
the form $8p^kn+c(p^k-2)^2$ is represented by $\gen (M+v)$ (for the precise definition of $\gen(M+v)$,
the readers may consult \cite{B,X}).
Moreover, we have the following lemma.
\begin{lemma} \label{Lem2.1}
Every integer of the form $8p^kn+c(p^k-2)^2$ is represented by $\gen (M+v)$ if and only if we have the following
{\rm (1)} and {\rm (2)}.\\
{\rm (1)} For each prime $q\not\in\{2,p\}$, $M_q\cong\langle1,-1,-dM\rangle$.\\
{\rm (2)} Either $4\nmid c$ or both $4\mid\mid c$ and $2\mid\mid ab$.
\end{lemma}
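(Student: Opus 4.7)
My plan is to apply the local--global principle for coset-genus representations: an integer $N$ is represented by $\gen(M+v)$ iff $N$ is locally represented by $M_q+v$ at every finite prime $q$; the archimedean condition is automatic as $Q$ is positive definite and $N>0$. I therefore analyze each prime and collate the resulting local conditions, finding that (1) and (2) come from the primes $q\notin\{2,p\}$ and $q=2$ respectively, while $q=p$ contributes no constraint.

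At any prime $q\notin\{2,p\}$, both $2$ and $p^k$ are $q$-adic units, so the coefficient $\la:=-(p^k-2)/(2p^k)$ of $e_3$ in $v$ lies in $\Z_q$; hence $v\in M_q$ and $M_q+v=M_q$. Since $8p^k\in\Z_q^\times$, the integers $N=8p^kn+c(p^k-2)^2$ are dense in $\Z_q$ as $n$ varies, so representability amounts to $\Z_q$-universality of $M_q$. By the standard theory of ternary $\Z_q$-lattices for odd $q$, this is equivalent to $M_q$ being isotropic with Jordan/diagonal form $\langle 1,-1,-dM\rangle$, which is condition (1). At $q=p$, taking $\al=\be=0$ in $m+v$ yields $Q(m+v)=cu^2$ with $u:=2p^k\ga-(p^k-2)\in 2+p^k\Z_p$; Hensel's lemma applied to $\ga\mapsto u^2$ (whose $p$-adic derivative $4p^ku$ has $\nu_p=k$) shows $u^2$ covers $4+p^k\Z_p$, so $cu^2$ covers $4c+p^k\Z_p\ni N$, proving $p$-adic representability is automatic.

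The bulk of the work is at $q=2$. Since $p^k\in\Z_2^\times$ and $2p^k\ga-(p^k-2)$ is always an odd $2$-adic integer, the expansion
\[
Q(m+v)=8p^k(a\al^2+b\be^2)+c\bigl(2p^k\ga-(p^k-2)\bigr)^2
\]
shows that the second summand has $2$-adic valuation exactly $\nu_2(c)$ while the first lies in $8\Z_2$, so $Q(m+v)\equiv c\pmod 8$ matches the residue of $N$. I split on $\nu_2(c)$: when $\nu_2(c)\in\{0,1\}$, the surjectivity of the odd-square map $1+2\Z_2\twoheadrightarrow 1+8\Z_2$, together with the parity flexibility of $a\al^2+b\be^2$ guaranteed by $\gcd(a,b,c)=1$, lets the triple $(\ga,\al,\be)$ already cover every admissible $N$, so no extra condition is needed; when $\nu_2(c)=2$, every admissible $N$ has $\nu_2(N)=2$, the $\ga$-contribution is forced modulo higher powers of $2$, and the remaining bridging reduces to $a\al^2+b\be^2$ exhausting $\Z/4\Z$, which by an elementary parity check happens iff $\nu_2(ab)=1$, i.e.\ $2\mid\mid ab$; when $\nu_2(c)\ge 3$, a refined residue analysis modulo $2^{\nu_2(c)+2}$ produces admissible $N$ that fail to be locally represented regardless of $a,b$. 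Combining these subcases yields condition (2).

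The main obstacle is the $2$-adic analysis in the case $\nu_2(c)=2$: one must carefully track the half-integral coset shift of $v$ in the $e_3$-direction, the distinct Jordan scales $2^{3+\nu_2(a)}$ and $2^{3+\nu_2(b)}$ of $M_2$, and the induced residue problem for $ax^2+by^2$ modulo $4$, which is precisely where the condition $2\mid\mid ab$ becomes forced.
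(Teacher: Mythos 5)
Your proposal is correct and takes essentially the same approach as the paper: both reduce representation by $\gen(M+v)$ to local representability at every prime, dispose of $q=p$ via Hensel's lemma, show the odd primes $q\ne p$ force $M_q\cong\langle1,-1,-dM\rangle$ (condition (1)), and derive condition (2) from the $2$-adic analysis of $ax^2+by^2$ modulo $4$ and $8$ according to $\nu_2(c)$.
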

\begin{proof}
As $P_{p^k+2}(z)=\frac{p^kz^2-(p^k-2)z}{2}$, by Hensel's Lemma, one may easily show that
$P_{p^k+2}(z)$ represents all $p$-adic integers over $\Z_p$, and note that $p\nmid c$, hence
each $p$-adic integer can be represented by $M_p+v$.

Now we consider prime $2$, using Hensel's Lemma, one may easily verify that $P_{p^k+2}(2z)$ can
represent all $2$-adic integers over $\Z_2$. If $2\nmid c$, clearly $cP_{p^k+2}(z)$ represents all
$2$-adic integers over $\Z_2$. If $2\mid\mid c$, then $\{cP_{p^k+2}(z): z\in\Z_2\}=2\Z_2$, since either
$a$ or $b$ is odd, we therefore have $f_{a,b,c,p^k}$ represents all $2$-adic integers over $\Z_2$. If $4\mid\mid c$, then $\{cP_{p^k+2}(z): z\in\Z_2\}=4\Z_2$.
Suppose $2\nmid ab$ or $4\mid ab$, then we have
$\#\{ax^2+by^2+4\Z: x,y\in\Z\}\le 3$ (where $\#S$ denotes the cardinality of a set $S$).
Suppose $2\mid\mid ab$, with the help of Hensel's Lemma, one may
easily check that $f_{a,b,c,p^k}$ represents all $2$-adic integers over $\Z_2$. If $8\mid c$, clearly,
$\#\{ax^2+by^2+8\Z: x,y\in\Z\}<7$, so the local conditions are not
satisfied.

Finally, if $q\not\in\{2,p\}$, then we have $M_q+v=M_q$.
If $M_q\cong\langle1,-1,-dM\rangle$, then
$M_q$ can represent all $q$-adic integers over $\Z_q$. Conversely, if each integer of the form $8p^kn+c(p^k-2)^2$ is represented by $\gen (M+v)$, clearly, $M_q$ represents all $q$-adic integers over
$\Z_q$. In particular $M_q$ must be isotropic. Hence, we have $M_q\cong\langle1,-1,-dM\rangle$.
\end{proof}

In view of Lemma \ref{Lem2.1}, we can simplify our problems. In fact, recall that
$M$ is the $\Z$-lattice $\langle 8p^ka,8p^kb,4p^{2k}c\rangle$ in the orthogonal basis
$\{e_1,e_2,e_3\}$, let $L$ be the $\Z$-lattice $\langle 8p^ka,8p^kb,c\rangle$ in the orthogonal
basis $\{e_1,e_2,\frac{1}{2p^k}e_3\}$. One may readily check the following result.

Suppose that both {\rm (1)} and {\rm (2)} in Lemma \ref{Lem2.1} hold. Then
$8p^kn+c(p^k-2)^2$ can be represented by the coset $M+v$ if and only if $8p^kn+c(p^k-2)^2$ can be
represented by $L$.

Now we need to introduce the theory of spinor exceptions. The readers can find relevant material in
\cite{SP00}. Let $W$ be a $\Z$-lattice on a quadratic space $V$ over $\Q$.
Suppose that $a\in \Z^+$ is represented by $\gen(W)$.
We call $a$ is a spinor exception of $\gen(W)$ if $a$ is represented by exactly half of
the spinor genera in the genus. R. Schulze-Pillot \cite{SP80} determined completely when $a$ is a spinor exception of $\gen(W)$. A. G. Earnest and J. S. Hisa and D. C. Hung \cite{EHH} found a similar characterization of primitive spinor exceptional numbers. We also need the following lemma.

\begin{lemma}\label{Lem2.2}
{\rm (i)} For each integer $m,n\ne0$, there are infinitely many rational primes
that split in $\Q(\sqrt{m})$ and are congruent to $1$ modulo $n$.

{\rm (ii)} Given an odd prime $p$ and a positive integer $k$, let $E\in\{\Q(\sqrt{-m}): m=1,2,p,2p\}$,
then there are infinitely many rational primes that are inert in $E$ and congruent to $\pm1$ modulo $p^k$.
\end{lemma}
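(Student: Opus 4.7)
The plan is to treat both parts as straightforward applications of the Chebotarev density theorem (or equivalently Dirichlet's theorem on primes in arithmetic progressions combined with a compositum-of-fields argument). Part (i) is essentially classical, while part (ii) requires a small case analysis depending on whether the quadratic field $E$ already sits inside $\Q(\zeta_{p^k})$.

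For part (i), assuming $m$ is not a perfect square (otherwise the assertion reduces to Dirichlet's theorem applied to $q\equiv1\pmod n$), I form the compositum $K=\Q(\sqrt m,\zeta_n)$. A prime $q$ unramified in $K$ splits in $\Q(\sqrt m)$ precisely when its Frobenius acts trivially on $\Q(\sqrt m)$, and satisfies $q\equiv1\pmod n$ precisely when its Frobenius acts trivially on $\Q(\zeta_n)$. Hence $q$ satisfies both conditions iff it splits completely in $K$, and Chebotarev's theorem furnishes infinitely many such primes.

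For part (ii), put $F=E\cdot\Q(\zeta_{p^k})$. If $E\not\subset\Q(\zeta_{p^k})$, then $E\cap\Q(\zeta_{p^k})=\Q$ (the intersection is a subfield of the quadratic extension $E$), so $\Aut(F/\Q)\cong\Aut(E/\Q)\times(\Z/p^k\Z)^{\times}$. The conjugacy class $(\tau,\pm1)$ with $\tau$ the nontrivial element of $\Aut(E/\Q)$ is nonempty, and Chebotarev yields infinitely many primes $q$ inert in $E$ with $q\equiv\pm1\pmod{p^k}$.

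The exceptional case $E\subset\Q(\zeta_{p^k})$ occurs only for $E=\Q(\sqrt{-p})$ with $p\equiv3\pmod4$, since the unique quadratic subfield of $\Q(\zeta_{p^k})$ is $\Q(\sqrt{p^*})$ where $p^*=(-1)^{(p-1)/2}p$. In that scenario $F=\Q(\zeta_{p^k})$, and a prime $q$ unramified in $F$ is inert in $E$ iff its Frobenius $q\bmod p^k$ is a nonsquare in $(\Z/p^k\Z)^{\times}$. Since $p^k\equiv3\pmod4$, the element $-1$ is itself a nonsquare in $(\Z/p^k\Z)^{\times}$, so Dirichlet's theorem applied to the progression $q\equiv-1\pmod{p^k}$ produces infinitely many primes that are inert in $E$ and satisfy $q\equiv-1\pmod{p^k}$. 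The main technical point to monitor is precisely this exceptional case: the condition $q\equiv1\pmod{p^k}$ would force $q$ to split in every quadratic subfield of $\Q(\zeta_{p^k})$, so the ``$\pm$'' in the statement is essential and one really must select $-1$ rather than $+1$ when $E\subset\Q(\zeta_{p^k})$.
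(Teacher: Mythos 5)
Your proof is correct, and for part (ii) it is considerably more substantive than what the paper actually provides. For part (i) the two arguments are essentially the same: the paper embeds $\Q(\sqrt m)$ into $\Q(\zeta_{d_K})\subseteq\Q(\zeta_{nd_K})$ via Kronecker--Weber and then invokes Dirichlet for primes $\equiv1\pmod{n|d_K|}$, which split completely in the cyclotomic field and hence in $\Q(\sqrt m)$; your Chebotarev argument on the compositum $\Q(\sqrt m,\zeta_n)$ is the same splitting-in-a-compositum idea, just phrased in Galois-theoretic language (and for abelian extensions Chebotarev reduces to Dirichlet anyway). For part (ii), however, the paper's entire proof is the sentence ``with the help of the Chinese Remainder Theorem, one may easily get the desired results,'' with a pointer to Lang; you actually supply the argument, and you correctly isolate the one delicate point: when $E=\Q(\sqrt{-p})$ with $p\equiv3\pmod4$, $E$ is the unique quadratic subfield of $\Q(\zeta_{p^k})$, so the Galois groups do not decompose as a direct product, the congruence class $q\equiv1\pmod{p^k}$ is useless (such $q$ split in $E$), and one must use $q\equiv-1\pmod{p^k}$, which works precisely because $-1$ is a nonresidue mod $p$ in that case. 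This is exactly why the statement says ``$\pm1$'' rather than ``$1$,'' and your write-up makes explicit a subtlety the paper leaves entirely to the reader. The complementary observation --- that the other three fields, and $\Q(\sqrt{-p})$ with $p\equiv1\pmod4$, all have even discriminant and so cannot sit inside $\Q(\zeta_{p^k})$ --- is also correct and completes the case analysis.
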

\begin{proof}
(i) Set $K=\Q(\sqrt{m})$ with absolute discriminant $d_K$, by Kronecker-Weber's Theorem, we have
$\Q\subseteq\Q(\sqrt{m})\subseteq\Q(\zeta_{d_K})\subseteq\Q(\zeta_{nd_K})$ (where $\zeta_l=e^{2\pi i/l})$.
By Dirichlet's Theorem, there are infinitely many rational primes that are congruent to $1$ modulo $n|d_K|$, since these primes totally split in $\Q(\zeta_{nd_K})$, they also split in $Q(\sqrt{m})$.
(ii) With the help of the Chinese Remainder Theorem, one may easily get the desired results
(for more details, the readers may consult the excellent book \cite{Lang}).
\end{proof}

Recall that $L$ is the $\Z$-lattice $\langle 8p^ka,8p^kb,c\rangle$, let $N$ be its
level. Suppose that a square-free positive integer $t$ is a primitive spinor exception
of $\gen(L)$ such that $t\equiv c\pmod8$ and $tc^{-1}$ is a quadratic residue modulo $p^k$. If $t$ is not represented by the spinor genus of
$L$, R. Schulze-Pillot \cite{SP00} proved that for each positive integer $m$ with $gcd(m,N)=1$, $tm^2$
is not represented by the spinor genus of $L$ provided that each prime divisor of $m$ splits in $E=\Q(\sqrt{-tdL})$. If $t$ is represented by the spinor genus of $L$, but not represented by $L$, then for each positive integer $m$ with $gcd(m,N)=1$, $tm^2$ is not primitively represented by the spinor genus of $L$ provided that at least one prime divisor of $m$ is inert in $E$. In particular, if each prime factor of $m$ is inert in $E$, then one may easily verify that $tm^2$ is not represented by $L$.
We will show below that $E$ must be in the set $\{\Q(\sqrt{-D}): D=1,2,p,2p\}$. Therefore, by the Chinese Remainder Theorem, there exists a prime $q_0$ not dividing $N$ such that $tq_0^2\equiv c(p^k-2)^2\pmod{8p^k}$.
In view of the above, by Lemma \ref{Lem2.2}, we can find infinitely many primes $q$ with $q^2\equiv1\pmod{8p^k}$ such that $tq_0^2q^2$ is not represented by $L$. Hence, $f_{a,b,c,p^k}$ is not almost universal.

On the other hand, suppose that both $(1)$ and $(2)$ in Lemma \ref{Lem2.1} hold,
it is easy to see that each positive integer of the form $8p^kn+c(p^k-2)^2$ can be represented by $\gen(L)$
primitively. Assume that each primitive spinor exception of $\gen(L)$ in the arithmetic progression $\{c(p^k-2)^2+8p^kn: n\in\N\}$ can be represented by $L$, then by \cite[the Corollary of Theorem 3]{DR}, one can easily verify that $8p^kn+c(p^k-2)^2$ can be represented by $L$ provided that
$n$ is sufficiently large (for more details, the readers may see \cite{DR,KS,WS}).

Now we consider the quadratic field $E=\Q(\sqrt{-tdL})$, let $\theta$ denote the spinor norm map
and $N_p(E)$ denote the group of local norm from $E_{\mathfrak{B}}$ to $\Q_p$, where $\mathfrak{B}$ is
an extension of $p$ to $E$.
By virtue of the proof of Lemma \ref{Lem2.1},
for all prime $q\not\in\{2,p\}$, $L_q$ represents all $q$-adic integers over $\Z_q$, hence we have
$\Z_q^{\times}\subseteq\theta(O^+(L_q))\subseteq N_q(E)$. Therefore, each prime $q\not\in\{2,p\}$ is
unramified in $E$, and hence $E\in \{\Q(\sqrt{-D}): D=1,2,p,2p\}$.

The theory of spinor exceptions involves the computation of the spinor norm groups of local integral rotations and the relative spinor norm groups.
The reader can find relevant formulae in \cite{EH75,EH78,EHH,Ke56}. A correction of some of these formulae can be found in \cite{WKCOH}.
\maketitle
\section{Proofs of Main Results}
\setcounter{lemma}{0}
\setcounter{theorem}{0}
\setcounter{corollary}{0}
\setcounter{remark}{0}
\setcounter{equation}{0}
\setcounter{conjecture}{0}

In this section, we shall prove our main results. Recall that $L$ is the $\Z$-lattice $\langle 8p^ka,8p^kb,c\rangle$ with $\nu_2(a)\ge\nu_2(b)$. The squarefree part of an integer $m$ is denoted by $\mathcal {SF}(m)$ and the odd part of $m$ is denoted by $m'$.
\medskip

\noindent{\it \bf Proof of Theorem 1.1}

We shall show that if conditions $(1),(2),(3)$ are all satisfied, then
$t=\mathcal{SF}(a'b'c')$
is a primitive spinor exception of $\gen(L)$ (it is easy to see that $t$ can be primitively represented by $L$ locally). Set $E=\Q(\sqrt{-tdL})$, one may easily verify that
$E\in\{\Q(\sqrt{-m}): m=1,2\}$.

When $q\mid \mathcal{SF}(a'b'c')$, by Lemma \ref{Lem2.1}, we have $L_q\cong\langle1,-1,-dL\rangle$.
Thus, by \cite[Theorem 1(a)]{EHH}, we have
$\theta(O^+(L_q))\subseteq N_q(E)=\theta^*(L_q,t)$ (where $\theta^*(L_q,t)$ is the primitively relative spinor norm group) if and only if $-tdL\in\Q_q^{\times2}$, i.e., $(1)$ is satisfied.

When $q\not\in\{2,p\}$ and $q\nmid t$, clearly, $q$ is not ramified in $E$. Moreover, by Lemma \ref{Lem2.1},
we have $L_q\cong M_q\cong\langle 1,-1,-dL\rangle.$
Therefore, we have $\theta(O^+(L_q))\subseteq N_q(E)$ and $\theta^*(L_q,t)=N_q(E)$
(by \cite[Theorem 1(a)]{EHH}).

When $q=p$, we have $L_p\cong\langle v_1,v_2p^{e},v_3p^{f}\rangle$ where $v_i\in\Z_p^{\times}$ and
$e=k+\nu_p(b)$, $f=k+\nu_p(a)$. Since $p$ is not ramified in $E$, by \cite[Theorem 1(a)]{EHH},
we have $\theta(O^+(L_p))\subseteq N_p(E)$ and $\theta^*(L_p,t)=N_p(E)$ if and only if $(2)$ is satisfied.

When $q=2$, then $L_2\cong\langle c', 2^rp^kb',2^sp^ka'\rangle$, where $r=3+\nu_2(b)$ and $s=3+\nu_2(a)$.
Now, suppose that $(3)$ is satisfied.

We first consider the case when $r<s$.
Let $U=\langle c',2^rp^kb'\rangle$ and $W=\langle 2^rp^kb',2^sp^ka'\rangle$.
By \cite[Theorem 2.7]{EH75}, then we have
$$\theta(O^+(L_2))=Q(\mathcal{P}(U))Q(\mathcal{P}(W))\Q_2^{\times2},$$ where
$\mathcal{P}(U)$ $(\mathcal{P}(W))$ is the set of primitive anisotropic vectors in $U$ $(V)$ whose associate symmetries are in $O(U)$ $(O(W))$. Note that
$$Q(\mathcal{P}(U))\Q_2^{\times2}=c'\theta(O^+(\langle1,2^rp^kb'c'\rangle)),$$ and
$$Q(\mathcal{P}(W))\Q_2^{\times2}=2^rp^kb'\theta(O^+(\langle1,2^{s-r}a'b'\rangle)).$$
The formulae for the spinor norm group of non-modular binary $\Z_2$-lattice are available in \cite[1.9]{EH75}.
Hence, one can easily verify the following results:
$$\theta(O^+(L_2))=\begin{cases}2^rp^kb'c'\{1,2,3,6\}\Q_2^{\times2}\cup\{1,2,3,6\}\Q_2^{\times2}&\mbox{if}\ s-r\in\{1,3\},\\2^rp^kb'c'\{1,5\}\Q_2^{\times2}\cup\{1,5\}\Q_2^{\times2}&\mbox{if}\ s-r\in\{2,4\} ,\\2^rp^kb'c'\{1,2^{s-r}\}\Q_2^{\times2}\cup\{1,2^{s-r}\}\Q_2^{\times2}&\mbox{if}\ s-r\ge5.\end{cases}$$
According to the above results, by \cite[Theorem 2(b)]{EHH}, we have
$\theta(O^+(L_2))\subseteq N_2(E)$ and $\theta^*(L_2,t)=N_2(E)$.

Now we consider the case when $r=s$, by \cite[1.2]{EH78}, we have
$$\theta(O^+(L_2))=\begin{cases}\Q_2^{\times}&\mbox{if}\ \text{(3) is not satisfied},\\\{\gamma\in\Q_2^{\times}: (\gamma,-1)_2=1\}= N_2(E)&\mbox{otherwise},\end{cases}$$
where $(\ ,\ )_2$ is the Hilbert Symbol in $\Q_2$. Hence, by \cite[Theorem 2(b)]{EHH}, we have
$\theta(O^+(L_2))\subseteq N_2(E)$ and $\theta^*(L_2,t)=N_2(E)$.

In view of the above, $t=\mathcal{SF}(a'b'c')$
is a primitive spinor exception of $\gen(L)$ and since $a'\equiv b'\pmod 8$, we also have $t\equiv c\pmod 8$.
If $(4)$ is satisfied, by the discussions following the proof of Lemma \ref{Lem2.2}, it is easy to see that $f_{a,b,c,p^k}$ is not almost universal.

Conversely, according to the results in \cite[Theorem 1(a) and Theorem 2(b)]{EHH}, if one of $(1),(2),(3)$ is not
satisfied, then $\gen(L)$ does not have any spinor exceptions in the arithmetic progression $\{c(p^k-2)^2+8p^kn: n\in\N\}$. Hence by the discussions following the proof
of Lemma \ref{Lem2.2}, we have $f_{a,b,c,p^k}$ is almost universal. Assume now that the conditions
$(1),(2),(3)$ are all satisfied. It might worth mentioning here that it is necessary in $(3)$ to require that
$a'\equiv b'\pmod 8$, since by the discussions following the proof of Lemma \ref{Lem2.2}, if $f_{a,b,c,p^k}$
is not almost universal, then there exists some odd integer $l$ such that
$tl^2=\mathcal{SF}(a'b'c')l^2\equiv c(p^k-2)^2\pmod {8p^k}$,
by Lemma \ref{Lem2.1}(i) and the fact that $p\nmid c$,
it is easy to see
that $gcd(c',a'b')=1$. Hence, we must have $a'\equiv b'\pmod 8$.
If $tc^{-1}$ is not a quadratic residue modulo $p^k$, then there does not exist any spinor exceptions of $\gen(L)$ in the arithmetic progression $\{c(p^k-2)^2+8p^kn: n\in\N\}$.
If $tc^{-1}$ is a quadratic residue modulo $p^k$ and $ax^2+by^2+cP_{p^k+2}(z)=\mathcal{SF}(a'b'c')$ has an integral solution, then each spinor exception of $\gen(L)$ in the arithmetic progression $\{c(p^k-2)^2+8p^kn: n\in\N\}$
can be represented by $L$. By the discussions following the proof of Lemma \ref{Lem2.2}, we have
$f_{a,b,c,p^k}$ is almost universal.

In view of the above, we complete the proof of Theorem \ref{Thm3.1}.\qed
\medskip

\noindent{\it \bf Proof of Theorem 1.2}

As in the proof of Theorem \ref{Thm3.1}, we have
$\theta(O^+(L_q))\subseteq N_q(E)$ and $\theta^*(L_q,t)=N_q(E)$ for each prime $q\ne2$
if and only if both $(1)$ and $(2)$ hold.

When $q=2$, then $L_2\cong\langle c', 2^rp^kb',2^sp^ka'\rangle$, where $r=3+\nu_2(b)=4$ and $s=3+\nu_2(a)$.
Now, suppose that $(3)$ is satisfied.

We first consider the case when $r<s$, if $s-r\in\{1,3\}$, then $L_2$ is of {\it Type} E and hence
$\theta(O^+(L_2))=\Q_2^{\times}$ (by \cite[1.1]{EH78}). If $s-r\not\in\{1,3\}$,
let $U=\langle c',2^4p^kb'\rangle$ and $W=\langle 2^4p^kb',2^sp^ka'\rangle$, then $$\theta(O^+(L_2))=Q(\mathcal{P}(U))Q(\mathcal{P}(W))\Q_2^{\times2}.$$ One
may easily obtain the following results:
$$\theta(O^+(L_2))=\begin{cases}\Q_2^{\times}&\mbox{if}\ s-r\in\{1,3\},\\p^kb'c'\{1,5\}\Q_2^{\times2}\cup\{1,5\}\Q_2^{\times2}&\mbox{if}\ s-r\in\{2,4\} ,\\p^kb'c'\{1,2^s,5,5\times2^s\}\Q_2^{\times2}\cup\{1,2^s,5,5\times2^s\}\Q_2^{\times2}&\mbox{if}\ s-r\ge5.\end{cases}$$
Hence, we have $\theta(O^+(L_2))\subseteq N_2(E)$ and $\theta^*(L_2,t)=N_2(E)$.

Now we consider the case when $r=s=4$, we have
$$\theta(O^+(L_2))=\begin{cases}\Q_2^{\times}&\mbox{if}\ \text{(3) is not satisfied},\\\{\gamma\in\Q_2^{\times}: (\gamma,-1)_2=1\}= N_2(E)&\mbox{otherwise}.\end{cases}$$
Hence, we have $\theta(O^+(L_2))\subseteq N_2(E)$ and $\theta^*(L_2,t)=N_2(E)$.

In view of the above, we have $t=\mathcal{SF}(a'b'c')$
is a primitive spinor exception of $\gen(L)$ and since $a'\equiv b'\pmod 8$, we also have $t\equiv c\pmod 8$.
As in the proof of Theorem \ref{Thm3.1}, if $(4)$ is satisfied, we have $f_{a,b,c,p^k}$ is not almost
universal.

The proof of the converse is similar to the proof in Theorem \ref{Thm3.1}.

This completes our proof of Theorem \ref{Thm3.2}.\qed
\medskip

\noindent{\it \bf Proof of Theorem 1.3}

In light of the proof of Theorem \ref{Thm3.1}, we have
$\theta(O^+(L_q))\subseteq N_q(E)$ and $\theta^*(L_q,t)=N_q(E)$ for each prime $q\ne2$
if and only if both $(1)$ and $(2)$ hold.

Now we consider the prime $q=2$, if $(3)$ is satisfied, let $r=3-\nu_2(c)$ and $s=3-\nu_2(c)+\nu_2(a)$.

If $\nu_2(c)=2$, then by Lemma \ref{Lem2.1}, we have $L_2^{1/4}\cong\langle c', 2p^kb',2^2p^ka'\rangle$. Then $L_2$ is of {\it Type} E and hence
$\theta(O^+(L_2))=\Q_2^{\times}\not\subseteq N_2(E)$. So, we must have $4\nmid c$. Hence, we
divide our proof into two cases.

{\it Case} 1. $\nu_2(c)=0$.

$L_2\cong\langle c', 2^3p^kb',2^sp^ka'\rangle$ in this case.
If $s=3$, then by \cite [1.2]{EH78}, we have $\theta(O^+(L_2))=\Q_2^{\times}\not\subseteq N_2(E)$,
if $s\in\{4,5,7\}$, $L_2$ is of {\it Type} E and hence $\theta(O^+(L_2))=\Q_2^{\times}\not\subseteq N_2(E)$.
Therefore, we just need consider the case when $3<s$ and $s\not\in\{4,5,7\}$. Let $U=\langle c',2^3p^kb'\rangle$ and $W=\langle 2^3p^kb',2^sp^ka'\rangle$, then $$\theta(O^+(L_2))=Q(\mathcal{P}(U))Q(\mathcal{P}(W))\Q_2^{\times2}.$$
Since
$$Q(\mathcal{P}(U))\Q_2^{\times2}=c'\theta(O^+(\langle1,2^3p^kb'c'\rangle)),$$
and
$$Q(\mathcal{P}(W))\Q_2^{\times2}=2^3p^kb'\theta(O^+(\langle1,2^{s-3}a'b'\rangle)).$$
If $a'\equiv b'\pmod 8$, by \cite[1.9]{EH75}, we have
$$Q(\mathcal{P}(U))\Q_2^{\times2}=c'\{\gamma\in\Q_2^{\times}: (\gamma,-2p^kb'c')_2=1\},$$
and
$$Q(\mathcal{P}(W))=\begin{cases}2p^kb'\{1,2,3,6\}\Q_2^{\times2}&\mbox{if}\ s=6,\\2p^kb'\Q_2^{\times2}\cup2^sp^kb'\Q_2^{\times2}&\mbox{if}\ s\ge 8.\end{cases}$$

Assume first that $\nu_2(a)\equiv 0\pmod 2$, if $p^kb'c'\not\equiv 1\pmod 4$, then we have $2p^kb'c'\Q_2^{\times2}
\in\theta(O^+(L_2))$. Hence $\theta(O^+(L_2))\not\subseteq N_2(E)=\{1,2,5,10\}\Q_2^{\times2}$. If $p^kb'c'\equiv 1\pmod 4$, we have $3c'\in Q(\mathcal{P}(U))$ and hence $6p^kb'c'\in\theta(O^+(L_2))\not\subseteq N_2(E)=\{1,2,5,10\}\Q_2^{\times2}$. Therefore, in Case 1,
we always have
$\theta(O^+(L_2))\not\subseteq N_2(E)$ if $\nu_2(a)\equiv 0\pmod 2$.

Assume now that $\nu_2(a)\not\equiv 0\pmod 2$, by the above formulae, one may easily verify that $\theta(O^+(L_2))\subseteq N_2(E)=\{1,2,3,6\}\Q_2^{\times2}$ if and
only if $p^kb'c'\equiv 1\pmod 8$ and $\nu_2(a)\ge 3$.

{\it Case} 2. $\nu_2(c)=1$.

We have $L_2^{1/2}\cong\langle c', 2^2p^kb',2^sp^ka'\rangle$ and $a'\equiv b'\pmod 4$ in this case. If $s=2$ or $s\in\{3,5\}$, by \cite[1.1 and 1.2]{EH78}, we have $\theta(O^+(L_2))\not\subseteq N_2(E)$. Hence, we just need consider the case when $s\not\in\{2,3,5\}$.
Let $U=\langle c',2^2p^kb'\rangle$ and $W=\langle 2^2p^kb',2^sp^ka'\rangle$, we have
$$\theta(O^+(L_2))=Q(\mathcal{P}(U))Q(\mathcal{P}(W))\Q_2^{\times2},$$
and
$$Q(\mathcal{P}(U))\Q_2^{\times2}=c'\{\gamma\in\Z_2^{\times}\Q_2^{\times2}: (\gamma,-p^kb'c')_2=1\},$$
and
$$Q(\mathcal{P}(W))\Q_2^{\times2}=\begin{cases}\{p^kb', 5p^kb'\}\Q_2^{\times2}&\mbox{if}\ s\in\{4,6\},\\p^kb'\Q_2^{\times2}\cup2^sp^ka'\Q_2^{\times2}&\mbox{if}\ s\ge7.\end{cases}$$
According to the above results, one may easily verify that $\theta(O^+(L_2))\subseteq N_2(E)$ if and only if
$\nu_2(a)\equiv 0\pmod 2$, $\nu_2(a)\ge 2$ and $p^kb'c'\equiv 1\pmod 4$.

Finally, as in the proof of Theorem \ref{Thm3.1}, if $(4)$ is satisfied, we have $f_{a,b,c,p^k}$ is not almost
universal.

Conversely, according to the results in \cite[Theorem 1(a) and Theorem 2(b)]{EHH}, if one of $(1),(2),(3)$ is not satisfied, then $\gen(L)$ does not have any spinor exceptions in the arithmetic progression $\{c(p^k-2)^2+8p^kn: n\in\N\}$. Hence by the discussions following the proof
of Lemma \ref{Lem2.2}, we have $f_{a,b,c,p^k}$ is almost universal. Assume now that the conditions
$(1),(2),(3)$ are all satisfied. It might worth mentioning here that it is necessary in $(3)$ to require that
$a'\equiv b'\pmod {2^{3-\nu_2(c)}}$, since by the discussions following the proof of Lemma \ref{Lem2.2}, if $f_{a,b,c,p^k}$
is not almost universal, then there exists some odd integer $l$ such that
$tl^2=2^{\nu_2(c)}\mathcal{SF}(a'b'c')l^2\equiv c(p^k-2)^2\pmod {8p^k}$,
by Lemma \ref{Lem2.1}(i) and the fact that $p\nmid c$, it is easy to see
that $gcd(c',a'b')=1$. Hence, we must have $a'\equiv b'\pmod {2^{3-\nu_2(c)}}$.
The remaining proof of the converse is similar to the proof in Theorem \ref{Thm3.1}.

This completes the proof of Theorem \ref{Thm3.3}.\qed
\medskip

\noindent{\it \bf Proof of Theorem 1.4}

We shall show that if conditions $(1),(2),(3)$ are all satisfied, then $w=2^{\nu_2(c)}\mathcal{SF}(pa'b'c')$
is a primitive spinor exception of $\gen(L)$. Set $E=\Q(\sqrt{-wdL})$, one may easily verify that
$E=\Q(\sqrt{-p})$.

When $q\mid\mathcal{SF}(pa'b'c')$, clearly, $q\not\in\{2,p\}$ and $q$ is not ramified in $E$, by Lemma \ref{Lem2.1}, we have
$L_q\cong M_q\cong\langle 1,-1,-dL\rangle$. Hence, by \cite[Theorem 1(a)]{EHH},
we have
$\theta(O^+(L_q))\subseteq N_q(E)=\theta^*(L_q,w)$ if and only if $-wdL\in\Q_q^{\times2}$,
i.e., $(1)$ is satisfied.

When $q\not\in\{2,p\}$ and $q\nmid w$, clearly, $q$ is not ramified in $E$. Moreover, by Lemma \ref{Lem2.1},
we have $L_q\cong M_q\cong\langle 1,-1,-dL\rangle$. Therefore, we have $\theta(O^+(L_q))\subseteq N_q(E)$ and $\theta^*(L_q,t)=N_q(E)$
(by \cite[Theorem 1(a)]{EHH}).

When $q=p$, by \cite[Satz 3]{Ke}, we have $$\theta(O^+(L_p))=
\\\Q_p^{\times2}\cup p^{\nu_p(a)+k}2a_0c\Q_p^{\times2}\cup p^{\nu_p(b)+k}2b_0c\Q_p^{\times2}\cup pa_0b_0\Q_p^{\times2}.$$
By \cite[Theorem 1(b)]{EHH}, we have
$$\theta(O^+(L_p))\subseteq N_p(E)=\theta^*(L_p,w)=\{1,p\}\Q_p^{\times2}$$
if and only if $(2)$ is satisfied.

When $q=2$, then $L_2^{1/2^{\nu_2(c)}}\cong\langle c',2^rp^kb',2^sp^ka'\rangle$, where
$r=3+\nu_2(b)-\nu_2(c)$ and $s=3+\nu_2(a)-\nu_2(c)$. If $p\equiv7\pmod8$, then $-wdL\in\Q_2^{\times2}$. Therefore, $\theta(O^+(L_2))\subseteq N_2(E)=\theta^*(L_2,w)=\Q_2^{\times}$.

If $p\equiv3\pmod8$, note that $2$ is unramified in $E$, by \cite[Theorem 2(a)]{EHH}, we have $\theta(O^+(L_2))\subseteq
N_2(E)$ and $\theta^*(L_2,w)=N_2(E)$ if and only if the Jordan components of $L_2$ all have even orders
(a $\Z_2$-lattice $M$ has even order
if $\nu_2(Q(v))$ is even for each primitive vector $v\in M$ which gives rise to an integral symmetry of $M$).
If $r<s$, then we must have $r\equiv s\equiv 0\pmod2$. If $r=s$, by \cite[Propositon 3.2(1)]{EH75}, one may
easily verify that $2^r\langle p^kb',p^ka'\rangle$ has even order if and only if
$r\equiv0\pmod2$ and $a'b'\equiv3\pmod4$.

In view of the above, it is easy to see that $w=2^{\nu_2(c)}\mathcal{SF}(pa'b'c')$ is a primitive spinor
exception of $\gen(L)$, and since $pa'b'\equiv 1\pmod {2^{3-\nu_2(c)}}$, we also have $w\equiv c\pmod 8$.
If $(4)$ is satisfied, by the discussions following the proof of Lemma \ref{Lem2.2}, we have $f_{a,b,c,p^k}$ is not almost universal.

Conversely, as in the proof in Theorem \ref{Thm3.1}, if one of the $(1),(2),(3)$ is not satisfied, then
$\gen(L)$ does not have any spinor exceptions in the arithmetic progression $\{c(p^k-2)^2+8p^kn: n\in\N\}$.
Assume now that the conditions
$(1),(2),(3)$ are all satisfied. It might worth mentioning here that it is necessary in $(3)$ to require that
$pa'b'\equiv 1\pmod {2^{3-\nu_2(c)}}$, since by the discussions following the proof of Lemma \ref{Lem2.2}, if $f_{a,b,c,p^k}$
is not almost universal, then there exists some odd integer $l$ such that
$wl^2=2^{\nu_2(c)}\mathcal{SF}(pa'b'c')l^2\equiv c(p^k-2)^2\pmod {8p^k}$,
by Lemma \ref{Lem2.1}(i) and the fact that $p\nmid c$, it is easy to see
that $gcd(c',pa'b')=1$. Hence, we must have $pa'b'\equiv 1\pmod {2^{3-\nu_2(c)}}$.
If $wc^{-1}$ is not a quadratic residue modulo $p^k$, then there does not exist any spinor exceptions of $\gen(L)$ in the arithmetic progression $\{c(p^k-2)^2+8p^kn: n\in\Z^+\}$.
If $wc^{-1}$ is a quadratic residue modulo $p^k$ and $ax^2+by^2+cP_{p^k+2}(z)=2^{\nu_2(c)}\mathcal{SF}(pa'b'c')$ has an integral solution, then each spinor exception of $\gen(L)$ in the arithmetic progression $\{c(p^k-2)^2+8p^kn: n\in\N\}$
can be represented by $L$. By the discussions following the proof of Lemma \ref{Lem2.2}, we have
$f_{a,b,c,p^k}$ is almost universal.

This completes the proof of Theorem \ref{Thm3.4}.\qed
\medskip

\noindent{\it \bf Proof of Theorem 1.5}

Set $w=2^{\nu_2(c)}\mathcal{SF}(pa'b'c')$ and $E=\Q(\sqrt{-wdL})=\Q(\sqrt{-p})$.
By the proof of Theorem \ref{Thm3.4}, then for each prime $q\ne2$, we have
$\theta(O^+(L_q))\subseteq N_q(E)$ and $\theta^*(L_q,t)=N_q(E)$ if and only if both $(1)$ and $(2)$ hold.

Now we consider the prime $q=2$. Note that $2$ is ramified in $E$ and $-p\not\in\Q_2^{\times2}$.
It is easy to see that
$$N_2(\Q(\sqrt{-p}))=\{1,\ 5,\ 1+p,\ 5\times(1+p)\}\Q_2^{\times2}.$$ If $(3)$ is satisfied,
set
$L_2=2^{\nu_2(c)}\langle c',2^rp^kb',2^sp^ka'\rangle$, $U=\langle c',2^rp^kb'\rangle$,
and $W=\langle 2^rp^kb',2^sp^ka'\rangle$,
where $r=3-\nu_2(c)+\nu_2(b)$ and $s=3-\nu_2(c)+\nu_2(a)$. According to
the different ways to compute $\theta(O^+(L_2))$, we will divide the remaining proof into the following four cases.

{\it Case} 1. $\nu_2(a)>\nu_2(b)\ge2$.

In this case, note that $a'b'\equiv p\pmod 8$, by \cite[Theorem 2.7]{EH75}, we have
$$\theta(O^+(L_2))=Q(\mathcal{P}(U))Q(\mathcal{P}(W))\Q_2^{\times2},$$
and
$$Q(\mathcal{P}(U))\Q_2^{\times2}=c'\theta(O^+(\langle1,2^rp^kb'c'\rangle)),$$ and
$$Q(\mathcal{P}(W))\Q_2^{\times2}=2^rp^kb'\theta(O^+(\langle1,2^{s-r}a'b'\rangle)).$$
Thus, one may easily obtain the following results:
$$Q(\mathcal{P}(U))\Q_2^{\times2}=\{c',2^rp^kb'\}\Q_2^{\times2},$$
and
$$Q(\mathcal{P}(W))\Q_2^{\times2}=\begin{cases}2^rp^kb'\{1,5\}\Q_2^{\times2}&\mbox{if}\ s-r\in\{2,4\},\\2^rp^kb'\{1,p\}\Q_2^{\times2}&\mbox{if}\ s-r\ge5.\end{cases}$$
Thus, we have
$$\theta(O^+(L_2)=\begin{cases}2^rp^kb'c'\{1,5\}\Q_2^{\times2}\cup\{1,5\}\Q_2^{\times2}&\mbox{if}\ s-r\in\{2,4\},\\2^rp^kb'c'\{1,p\}\Q_2^{\times2}\cup\{1,p\}\Q_2^{\times2}&\mbox{if}\ s-r\ge5.\end{cases}$$
Hence, by \cite[Theorem 2(b)]{EHH} we have
$\theta(O^+(L_2))\subseteq N_2(E)$ and $\theta^*(L_2,w)=N_2(E)$ if and only if
$2^{1+\nu_2(b)}p^kb'c'\in N_2(E)$, i.e., (i) of $(3)$ is satisfied.

{\it Case} 2. $\nu_2(a)>\nu_2(b)=1$.

In this case, we have $r=4$, note that $L_2$ is not of {\it Type} E and $a'b'\equiv p\pmod 8$. Hence, by \cite[Theorem 2.7]{EH75}, we have
$$\theta(O^+(L_2))=Q(\mathcal{P}(U))Q(\mathcal{P}(W))\Q_2^{\times2},$$
and
$$Q(\mathcal{P}(U))\Q_2^{\times2}=c'\theta(O^+(\langle1,2^4p^kb'c'\rangle)),$$
and
$$Q(\mathcal{P}(W))\Q_2^{\times2}=2^4p^kb'\theta(O^+(\langle1,2^{s-4}a'b'\rangle)).$$
Then, note that $p\equiv 1\pmod 4$, one may easily get the following results:
$$Q(\mathcal{P}(U))\Q_2^{\times2}=\{c',5c'\}\Q_2^{\times2}\cup\{b',5b'\}\Q_2^{\times2},$$
and
$$Q(\mathcal{P}(W))\Q_2^{\times2}=\begin{cases}b'\{1,5\}\Q_2^{\times2}&\mbox{if}\ s-r\in\{2,4\},\\p^kb'\{1,p\}\Q_2^{\times2}&\mbox{if}\ s-r\ge5.\end{cases}$$
Thus we have
$$\theta(O^+(L_2)=\begin{cases}b'c'\{1,5\}\Q_2^{\times2}\cup\{1,5\}\Q_2^{\times2}&\mbox{if}\ s-r\in\{2,4\},\\b'c'\{1,5\}\Q_2^{\times2}\cup\{1,5\}\Q_2^{\times}&\mbox{if}\ s-r\ge5.\end{cases}$$
Therefore, $\theta(O^+(L_2))\subseteq N_2(E)$ and $\theta^*(L_2,w)=N_2(E)$ if and only if $b'c'\equiv1\pmod4$.

{\it Case} 3. $\nu_2(a)>\nu_2(b)=0$.

In the present case, if $4\mid\mid c$, then
$L_2^{1/4}\cong\langle c', 2p^kb',2^2p^ka'\rangle$,
by \cite[1.1]{EH78}, $L_2$ is of {\it Type} E and hence
$\theta(O^+(L_2))=\Q_2^{\times}\not\subseteq N_2(E)$.

If $2\nmid c$, then $L_2\cong\langle c',2^3p^kb',2^sp^ka'\rangle$,
by \cite[Theorem 2(b)(iv)]{EHH}, we have $\theta^*(L_2,w)\not\subseteq N_2(E)$.

If $2\mid\mid c$, then $L_2^{1/2}\cong\langle c', 2^2p^kb',2^sp^ka'\rangle$,
$U=\langle c',2^2p^kb'\rangle$ and $W=\langle 2^2p^kb',2^sp^ka'\rangle$,
note that $L_2$ is not of {\it Type} E,
by \cite[Theorem 2.7]{EH75}, we have
$$\theta(O^+(L_2))=Q(\mathcal{P}(U))Q(\mathcal{P}(W))\Q_2^{\times2},$$
and
$$Q(\mathcal{P}(U))\Q_2^{\times2}=c'\{\gamma\in\Z_2^{\times}\Q_2^{\times2}: (\gamma,-p^kb'c')_2=1\},$$
and
$$Q(\mathcal{P}(W))\Q_2^{\times2}=\begin{cases}p^kb'\{1,5\}\Q_2^{\times2}&\mbox{if}\ s\in\{4,6\},\\p^kb'\Q_2^{\times2}\cup p^ka'\Q_2^{\times2}&\mbox{if}\ s\ge7.\end{cases}$$
By the above results and note that $a'\equiv b'\pmod 4$, then we have
$$\theta(O^+(L_2)=\begin{cases}\Z_2^{\times}\Q_2^{\times2}&\mbox{if}\ b'c'\equiv 3\pmod 4,\\\{1,5\}\Q_2^{\times}&\mbox{if}\ b'c'\equiv 1\pmod 4.\end{cases}$$
Hence, we have
$\theta(O^+(L_2))\subseteq N_2(E)$ and $\theta^*(L_2,w)=N_2(E)$ if and only if $b'c'\equiv1\pmod4$.

In view of the Case 3 and Case 4, when $\nu_2(b)\in\{0,1\}$ and $\nu_2(a)>\nu_2(b)$,
$\theta(O^+(L_2))\subseteq N_2(E)$ and $\theta^*(L_2,w)=N_2(E)$ if and only if (ii) of $(3)$ is satisfied.

{\it Case} 4. $\nu_2(a)=\nu_2(b)$.

In the present case, $L_2^{1/2^{\nu_2(c)}}\cong\langle c'\rangle\perp2^r\langle p^kb',p^ka'\rangle$,
if $r\le3$, by \cite[1.2]{EH78}, we have $\theta(O^+(L_2))\not\subseteq N_2(E)$, hence we must have
$\nu_2(a)=\nu_2(b)\ge1$.

If $p\equiv1\pmod8$, then we have
$$N_2(\Q(\sqrt{-p}))=\{1,2,5,10\}\Q_2^{\times2}.$$ Note that $a'b'\equiv 1\pmod 8$,
by \cite[1.2]{EH78}, one may easily obtain the following result:
$$\theta(O^+(L_2))=\begin{cases}\Q_2^{\times}\ \text{or}\ \Z_2^{\times}\Q_2^{\times2}&\mbox{if}\ b'c'\not\equiv 1\pmod 4 ,\\\{\gamma\in\Q_2^{\times}: (\gamma,-1)_2=1\}= N_2(E)&\mbox{if}\ b'c'\equiv 1\pmod 4.\end{cases}$$

If $p\equiv5\pmod8$,
then we have
$$N_2(\Q(\sqrt{-p}))=\{1,5,6,14\}\Q_2^{\times2}.$$ Note that $a'b'\equiv 5\pmod 8$,
by \cite[1.2]{EH78}, we can obtain the following result:
$$\theta(O^+(L_2))=\begin{cases}\Q_2^{\times}\ \text{or}\ \Z_2^{\times}\Q_2^{\times2}&\mbox{if}\ b'c'\not\equiv 2+(-1)^{\nu_2(b)}\pmod 4 ,\\\{\gamma\in\Q_2^{\times}: (\gamma,-5)_2=1\}= N_2(E)&\mbox{if}\ b'c'\equiv 2+(-1)^{\nu_2(b)}\pmod 4.\end{cases}$$
Hence, by the above results and \cite[Theorem 2(b)]{EHH}, when $\nu_2(a)=\nu_2(b)$,
then $\theta(O^+(L_2))\subseteq N_2(E)$ and $\theta^*(L_2,w)=N_2(E)$ if and only if (iii) of $(3)$ is satisfied.

In view of the above, we have $w=2^{\nu_2(c)}\mathcal{SF}(pa'b'c')$ is a primitive spinor exception of $\gen(L)$, and since $pa'b'\equiv 1\pmod {2^{3-\nu_2(c)}}$, we also have $w\equiv c\pmod8$. As in the proof
of Theorem \ref{Thm3.4}, one may easily verify that $f_{a,b,c,p^k}$ is not almost universal if (4) is
satisfied.

The proof of the converse is similar to the proof in Theorem \ref{Thm3.4}.

This completes our proof of Theorem \ref{Thm3.5}.
\qed
\medskip

\noindent{\it \bf Proof of Theorem 1.6}

Set $w=2^{\nu_2(c)}\mathcal{SF}(pa'b'c')$ and $E=\Q(\sqrt{-wdL})=\Q(\sqrt{-2p})$, by virtue of the proof
in Theorem \ref{Thm3.4}, then for each prime $q\ne2$, we have
$\theta(O^+(L_q))\subseteq N_q(E)$ and $\theta^*(L_q,t)=N_q(E)$ if and only if both $(1)$ and $(2)$ hold.

Now we consider the prime $q=2$, it is easy to see that
$$N_2(E)=\{1,\ 2p,\ 1+2p,\ 4+2p\}\Q_2^{\times2}.$$
If $(3)$ is satisfied,
set $L_2\cong 2^{\nu_2(c)}\langle c',2^rp^kb',2^sp^ka'\rangle$,
$U=\langle c',2^rp^kb'\rangle$ and $W=\langle 2^rp^kb',2^sp^ka'\rangle$,
where $r=3-\nu_2(c)+\nu_2(b)$ and $s=3-\nu_2(c)+\nu_2(a)$.
The formulae for $\theta(O^+(L_2))$ can be found in Theorem \ref{Thm3.1}--\ref{Thm3.3}.
We shall divide the remaining proof into the following three cases.

{\it Case} 1. $\nu_2(a)>\nu_2(b)\ge2$.

In the present case, note that $a'b'\equiv p\pmod 8$, we have
$$\theta(O^+(L_2))=Q(\mathcal{P}(U))Q(\mathcal{P}(W))\Q_2^{\times2},$$
and
$$Q(\mathcal{P}(U))\Q_2^{\times2}=c'\theta(O^+(\langle1,2^rp^kb'c'\rangle)),$$
and
$$Q(\mathcal{P}(W))\Q_2^{\times2}=2^rp^kb'\theta(O^+(\langle1,2^{s-r}a'b'\rangle)).$$
Hence, we may easily obtain the following results:
$$Q(\mathcal{P}(U))\Q_2^{\times2}=\{c',2^rp^kb'\}\Q_2^{\times2},$$
and
$$Q(\mathcal{P}(W))\Q_2^{\times2}=\begin{cases}2^rp^kb'N_2(E)&\mbox{if}\ s-r\in\{1,3\},\\2^rp^kb'\{1,2p\}\Q_2^{\times2}&\mbox{if}\ s-r\ge5.\end{cases}$$

$$\theta(O^+(L_2))=\begin{cases}2^rp^kb'c'N_2(E)\cup N_2(E)&\mbox{if}\ s-r\in\{1,3\},\\2^rp^kb'c'\{1,2p\}\Q_2^{\times2}\cup\{1,2p\}\Q_2^{\times2}&\mbox{if}\ s-r\ge5.\end{cases}$$
By \cite[Theorem 2(c)]{EHH}, one may easily verify that
$\theta(O^+(L_2))\subseteq N_2(E)$ and $\theta^*(L_2,w)=N_2(E)$ if and only if
$2^{1+\nu_2(b)}p^kb'c'\in N_2(E)$.

{\it Case} 2. $\nu_2(a)>\nu_2(b)=1$.

In this case, note that $a'b'\equiv p\pmod 8$ and $L_2\cong 2^{\nu_2(c)}\langle c',2^4p^kb',2^sp^ka'\rangle$.
If $s\in\{5,7\}$, then $L_2$ is of {\it Type} E, and hence $\theta(O^+(L_2))\not\subseteq N_2(E)$.

If $s\ge 9$, we have
$$\theta(O^+(L_2))=Q(\mathcal{P}(U))Q(\mathcal{P}(W))\Q_2^{\times2},$$
and
$$Q(\mathcal{P}(U))\Q_2^{\times2}=c'\theta(O^+(\langle1,2^4p^kb'c'\rangle)),$$
and
$$Q(\mathcal{P}(W))\Q_2^{\times2}=2^4p^kb'\theta(O^+(\langle1,2^{s-4}a'b'\rangle)).$$
Hence, we may easily obtain the following results:
$$Q(\mathcal{P}(U))\Q_2^{\times2}=\{c',5c'\}\Q_2^{\times2}\cup\{p^kb', 5p^kb'\}\Q_2^{\times2},$$
and
$$Q(\mathcal{P}(W))\Q_2^{\times2}=p^kb'\{1,2p\}\Q_2^{\times2}.$$
Thus, we have $5\in\theta(O^+(L_2))$ in the present case. However,
since $N_2(E)=\{1,\ 2p,\ 1+2p,\ 4+2p\}\Q_2^{\times2}$ with $1+2p\not\equiv1\pmod4$, hence $\theta(O^+(L_2)\not\subseteq N_2(E)$.

{\it Case} 3. $\nu_2(a)>\nu_2(b)=0$.

In the present case, if $\nu_2(c)=2$, then we have $L_2^{1/4}\cong\langle c', 2p^kb',2^2p^ka'\rangle$. By \cite[1.1]{EH78}, $L_2$ is of {\it Type} E and hence
$\theta(O^+(L_2))=\Q_2^{\times}\not\subseteq N_2(E)$. So, we must have $4\nmid c$.

If $\nu_2(c)=1$, then $L_2^{1/2}\cong\langle c', 2^2p^kb',2^sp^ka'\rangle$. If $s\in\{3,5\}$,
then $L_2$ is of {\it Type} E. So, we just need consider the case when $s>5$, then
$U=\langle c',2^2p^kb'\rangle$ and $W=\langle 2^2p^kb',2^sp^ka'\rangle$, we have
\begin{align*}
&Q(\mathcal{P}(U))\Q_2^{\times2}=c'\{\gamma\in\Z_2^{\times}\Q_2^{\times2}: (\gamma,-p^kb'c')_2=1\},
\\&Q(\mathcal{P}(W))\Q_2^{\times2}=p^kb'\Q_2^{\times2}\cup2p^ka'\Q_2^{\times2}.
\end{align*}
It is easy to see that
$$Q(\mathcal{P}(U))\Q_2^{\times2}=\begin{cases}\{c',5c'\}\Q_2^{\times2}&\mbox{if}\ p^kb'c'\equiv1\pmod4,\\\Z_2^{\times}\Q_2^{\times2}&\mbox{if}\ p^kb'c'\equiv3\pmod4.\end{cases}$$
Hence, we always have $5\in \theta(O^+(L_2)$. However,
since $N_2(E)=\{1,\ 2p,\ 1+2p,\ 4+2p\}\Q_2^{\times2}$ with $1+2p\not\equiv1\pmod4$, hence $\theta(O^+(L_2)\not\subseteq N_2(E)$.

If $2\nmid c$, then $L_2\cong\langle c', 2^3p^kb',2^sp^ka'\rangle$, if $s=4$, then $L_2$ is of {\it Type} E.
So, we just need consider the case when $s>4$.
In the present case, $U=\langle c',2^3p^kb'\rangle$ and $W=\langle 2^3p^kb',2^sp^ka'\rangle$, then $$\theta(O^+(L_2))=Q(\mathcal{P}(U))Q(\mathcal{P}(W))\Q_2^{\times2}.$$
Note that $a'b'\equiv p\pmod 8$ in the present case, then we have
$$Q(\mathcal{P}(U))\Q_2^{\times2}=c'\{\gamma\in\Q_2^{\times}: (\gamma,-2p^kb'c')_2=1\},$$
and
$$Q(\mathcal{P}(W))=\begin{cases}2p^kb'N_2(E)&\mbox{if}\ s=6,\\2p^kb'\{1,2p\}\Q_2^{\times2}&\mbox{if}\ s\ge8.\end{cases}$$
If $2p^kb'c'\not\in N_2(E)$, since $2p^kb'c'\in\theta(O^+(L_2))$, we have $\theta(O^+(L_2)\not\subseteq N_2(E)$.
If $2p^kb'c'\in N_2(E)$, then we have $2p^kb'c'\in \{2(p+2)\Q_2^{\times2},\ 2p\Q_2^{\times2}\}$.
Suppose first that $2p^kb'c'\in 2(p+2)\Q_2^{\times2}$, then we have
$$Q(\mathcal{P}(U))\Q_2^{\times2}=c'N_2(\Q(\sqrt{-2(p+2)}))=c'\{1,5+2p,2(p+2),2(p+4)\}\Q_2^{\times2},$$
since $5+2p\not\in (1+2p)\Q_2^{\times2}$, then one may easily verify that
$\theta(O^+(L_2)\not\subseteq N_2(E)$.
Suppose now that $2p^kb'c'\in 2p\Q_2^{\times2}$, then we have
$Q(\mathcal{P}(U))\Q_2^{\times2}=c'N_2(E)$, one may easily verify that
$\theta(O^+(L_2)\subseteq N_2(E)$. Hence, by the above results and \cite[Theorem 2(c)]{EHH},
in Case $3$, $\theta(O^+(L_2))\subseteq N_2(E)$ and $\theta^*(L_2,w)=N_2(E)$ if and only if
$2\nmid c$, $\nu_2(a)\ge 3$ and $p^kb'c'\equiv p\pmod 8$.

In view of the above, it is easy to see that $w=\mathcal{SF}(pa'b'c')$
is a primitive spinor exception of $\gen(L)$, and since $pa'b'\equiv 1\pmod 8$, we also have
$w\equiv c\pmod8$. By the proof
of Theorem \ref{Thm3.4}, one may easily verify that $f_{a,b,c,p^k}$ is not almost universal if (4) is
satisfied.

The proof of the converse is similar to the proof in Theorem \ref{Thm3.4}.

Combining the above we finally obtain the desired result.
\qed
\medskip

\subsection*{Acknowledgement}
I would like to thank my advisor, Zhi-Wei Sun, for his steadfast encouragement. I appreciate
the interest and feedback of Li Yang, Hao Pan, He-Xia Ni and Guo-Shuai Mao. Finally, I am exceedingly thankful for the careful reading and indispensable suggestions of the anonymous referee.

\end{document}